\pdfoutput=1
\documentclass[11pt]{article}

\usepackage[margin=1in]{geometry}
\usepackage[round,authoryear]{natbib}

\usepackage[utf8]{inputenc}
\usepackage[T1]{fontenc}
\usepackage{amsmath,amssymb,amsthm,mathtools}
\usepackage{bm}
\usepackage{booktabs}
\usepackage{multirow}
\usepackage{graphicx}
\usepackage{enumitem}
\usepackage{algorithm}
\usepackage{algpseudocode}
\usepackage{tikz}
\usetikzlibrary{arrows.meta}
\makeatletter
\providecommand{\theHALG@line}{}
\renewcommand{\theHALG@line}{\thealgorithm.\arabic{ALG@line}}
\makeatother
\usepackage{url}
\usepackage{nicefrac}
\usepackage{microtype}
\usepackage{xcolor}
\usepackage{hyperref}
\hypersetup{colorlinks=true,linkcolor=blue,citecolor=blue,urlcolor=blue}
\usepackage{aliascnt}
\usepackage[capitalize,noabbrev]{cleveref}

\newtheorem{theorem}{Theorem}

\newaliascnt{lemma}{theorem}
\newtheorem{lemma}[lemma]{Lemma}
\aliascntresetthe{lemma}

\newaliascnt{proposition}{theorem}
\newtheorem{proposition}[proposition]{Proposition}
\aliascntresetthe{proposition}

\newaliascnt{corollary}{theorem}
\newtheorem{corollary}[corollary]{Corollary}
\aliascntresetthe{corollary}

\newaliascnt{remark}{theorem}

\aliascntresetthe{remark}

\newaliascnt{definition}{theorem}
\newtheorem{definition}[definition]{Definition}
\aliascntresetthe{definition}

\newaliascnt{assumption}{theorem}

\aliascntresetthe{assumption}

\newaliascnt{example}{theorem}
\newtheorem{example}[example]{Example}
\aliascntresetthe{example}

\crefname{theorem}{Theorem}{Theorems}
\Crefname{theorem}{Theorem}{Theorems}
\crefname{lemma}{Lemma}{Lemmas}
\Crefname{lemma}{Lemma}{Lemmas}
\crefname{proposition}{Proposition}{Propositions}
\Crefname{proposition}{Proposition}{Propositions}
\crefname{corollary}{Corollary}{Corollaries}
\Crefname{corollary}{Corollary}{Corollaries}
\crefname{remark}{Remark}{Remarks}
\Crefname{remark}{Remark}{Remarks}
\crefname{definition}{Definition}{Definitions}
\Crefname{definition}{Definition}{Definitions}
\crefname{assumption}{Assumption}{Assumptions}
\Crefname{assumption}{Assumption}{Assumptions}
\crefname{algorithm}{Algorithm}{Algorithms}
\Crefname{algorithm}{Algorithm}{Algorithms}
\crefname{example}{Example}{Examples}
\Crefname{example}{Example}{Examples}

\newcommand{\RR}{\mathbb{R}}
\newcommand{\X}{\mathcal{X}}
\newcommand{\C}{\mathcal{C}}
\newcommand{\dir}{\operatorname{dir}}
\newcommand{\spanop}{\operatorname{span}}
\newcommand{\Wstar}{\dir(\X^\star(\C))}
\newcommand{\dstar}{d^\star}
\newcommand{\argmin}{\mathop{\rm arg\,min}}

\title{The Geometry of Linear Program Compression: An Exact Characterization and Learning Algorithm}
\author{Yuhan Ye\\
MIT\\
\texttt{yyh03@mit.edu}
\and
Omar Bennouna\\
MIT\\
\texttt{omarben@mit.edu}}
\date{}

\begin{document}
\maketitle

\begin{abstract}
We study how much a linear program (LP) can be compressed when solved repeatedly, given prior knowledge about its objective function. Existing data-driven projection methods learn low-dimensional surrogate LPs with approximate objective-value guarantees, but cannot provably identify the optimal projection for a prescribed compression budget. We instead ask a sharper question: how far can an LP be compressed into a lower-dimensional equivalent while \emph{exactly} preserving optimality, enabling faster repeated solves with no loss in solution quality? We provide an exact geometric characterization of such compressed LPs, together with a tractable sample-based learning algorithm that comes with fast-rate guarantees: the compressed LP recovers the optimal solution of an unseen instance with probability at least $1-\widetilde O(\dstar/n)$, where $\dstar$ is the dimension of the decision-relevant subspace, and $n$ is the number of available historical LP samples. This $1/n$ dependence is sharper than the $\widetilde O(1/\sqrt n)$ uniform-convergence rates of approximate projection methods. Our framework further exposes a tunable tradeoff between the dimension of the compressed LP and the probability of recovering the optimal solution, allowing the user to trade compression for accuracy.
\end{abstract}

\section{Introduction}

Many decision systems repeatedly solve linear programs (LPs) over the same feasible
region while the objective changes.  This setting arises in routing, resource
allocation, network operations, and other large-scale planning tasks.  In such
applications, one would like to compress the repeated optimization problem once and
reuse the compressed representation on future instances.  Projection-based approaches
address this goal by learning or sampling a low-dimensional projection and solving a
surrogate LP in the projected space \citep{sakaue2024projections,iwata2025heterogeneous}, or sampling random projections \citet{vu2018random}.
These methods can yield substantial speedups, but their guarantees are typically
approximate, lack interpretability and exact optimality guarantees.

We study the exact version of this compression problem.  Let
\begin{align}
    \min_{x\in\X} c^\top x \label{LP}
\end{align}

be a family of LPs with a fixed nonempty bounded polytope $\X\subseteq\RR^d$ and a cost vector $c$ known to lie in a prior set $\C\subseteq\RR^d$. 
The central question we propose is:

\begin{quote}
When an LP is solved repeatedly with varying costs, how much can future instances be reduced in dimension while exactly preserving optimality, given prior knowledge about the cost vector?
\end{quote}

Here, prior knowledge takes the form of either a set $\mathcal C \subset \mathbb R^d$ containing the cost vector, historical samples of $c$, or both. We first focus on the known-prior-set setting and later extend our results to the case where only historical samples are available. These models arise in many real-world problems. In transportation planning for example, fastest-route problems can be written as shortest path or min-cost flow LPs: the network is fixed, while edge travel times fluctuate locally. In contextual linear optimization, costs are generated from context and are naturally modeled as supported on a bounded set. In such settings, exact optimality-preserving compression can speed up repeated LP solves and reduce the amount of information that must be learned about $c$.

Against this background, we show throughout the paper that the quantity governing LP compression is not the ambient dimension $d$, but the dimension of the \emph{decision-relevant subspace}: the subspace spanned 
by directions along which perturbations to $c$ can change the optimal solution. Crucially, even when $d$ is very large, this subspace can be low-dimensional. Building on 
\citet{bennouna2025whatdata} and \citet{bennouna2026datainformativeness}, when $\mathcal{C}$ is 
convex and open, this subspace can be written as
\[
   \mathrm{dir}(\mathcal{X}^\star(\mathcal{C})) 
    := \mathrm{span}\{x - x' : x, x' \in \mathcal{X}^\star(\mathcal{C})\},
\]
where $\mathcal{X}^\star(\mathcal{C})$ denotes the set of all optimal solutions as $c$ varies 
over $\mathcal{C}$. Equivalently, $\Wstar$ is the span of differences between reachable optimal solutions. Important insight from their results is the following: a set of function evaluations $c^\top q_1, \ldots, c^\top q_r$ contains the necessary information to recover the solution set of~\eqref{LP} if and only if $\Wstar \subseteq \mathrm{span}(q_1, \ldots, q_r)$; 
such a set $\{q_1, \ldots, q_r\}$ is called a \emph{sufficient decision dataset} (SDD). The key implication is that the optimal solution depends on $c$ only through its projection onto 
$\Wstar$, and $\Wstar$ itself is the smallest subspace with this property.

Recent work by \citet{ye2026learning}, clarifies the computational and statistical landscape of this characterization.  On the negative side, constructing a basis of the decision-relevant space $\Wstar$ is NP-hard and coNP-hard. On the positive side, the same work shows that one can learn such a basis from i.i.d. cost vector samples. In particular, they provide an algorithm that outputs a set of vectors $\{q_1,\dots,q_r\}$ such that for any unobserved instance $c\in \C$, the measurements $c^\top q_1,\dots, c^\top q_r$ have enough information to solve \eqref{LP} with probability at least $1-\widetilde O(\dstar/n)$ where $n$ is the sample size and $d^\star:=\mathrm{dim}\; \Wstar$.

From the standpoint of LP compression, three challenges remain. First, the existing SDD literature \citep{bennouna2025whatdata,bennouna2026datainformativeness} characterizes which measurements suffice to recover optimal decisions, but it does not provide compressed reformulations of LPs induced by the decision-relevant subspace. Second, the sample-based guarantee for the algorithm in \citet{ye2026learning} assumes a strong nondegeneracy condition, whereas many repeated-LP models of practical interest are degenerate. Third, although it is relevant in practice to assume that $\C$ is known, one may only observe historical cost samples without knowing an exact prior set in data-driven settings.

\subsection{Our contributions}

\begin{enumerate}[leftmargin=1.4em,itemsep=2pt,topsep=2pt]

\item \textbf{Exact reduced LPs with a fast-rate learning certificate.}
We turn the decision-relevant subspace characterization of 
\citet{bennouna2025whatdata,bennouna2026datainformativeness} into an explicit solver-side 
reduction: any subspace containing $\Wstar$ yields an exact reduced LP, and a basis 
of $\Wstar$ gives a minimal exact $d^\star$-variable reformulation. Unlike 
approximate projection methods, which treat the projection as a black-box parameter to be 
optimized without identifying what makes it optimal, our framework reveals $\Wstar$ 
as the unique object that governs exact compression, and as opposed to random projection and projection learning methods, can be applied to totally unimodular LPs. When sufficient power is available, 
this reduction requires no data at all; when it is not, cost samples suffice with provable 
guarantees of recovering the optimal solution on any future instance with probability at least 
$1 - \widetilde{O}(d^\star/n)$, a $1/n$ rate that is sharper than the 
$\widetilde{O}(1/\sqrt{n})$ rates of approximate methods. In contrast with the learner of \citet{ye2026learning}, our approach does not require a nondegeneracy assumption on the LP.

\item \textbf{Sample-based learning under unknown priors.} We extend our framework to the setting where the uncertainty set $\mathcal{C}$ is unknown, giving an iterative algorithm that progressively recovers the decision-relevant subspace one new direction at a time from training samples. The resulting compression enjoys fast-rate generalization guarantees and outperforms both projection-learning methods and random projections. We further extend our approach to expose a tunable tradeoff between compression and optimality: by selectively incorporating training samples into the learning process, the user can shrink the compressed LP at a controlled cost in optimal-solution recovery.

\item \textbf{Empirical separation from approximate projections.}
We compare our exact reduction algorithm against projection learning baselines on both synthetic repeated-LP families and standard Netlib benchmark instances. 
We show that our exact reduction substantially outperforms all baselines in terms of data requirement and accuracy.
\end{enumerate}

\subsection{Related work}

\paragraph{Decision-sufficient representations for linear programs.}
\citet{bennouna2025whatdata} and \citet{bennouna2026datainformativeness} characterize what 
information about the cost vector is needed to recover optimal decisions. Their key finding is 
that optimality is governed by a single geometric object, the subspace $\Wstar$ spanned by differences between reachable optimal 
solutions: a set of linear measurements $\{c^\top q_1, \ldots, c^\top q_r\}$ is sufficient to 
solve~\eqref{LP} if and only if the directions $\{q_1, \ldots, q_r\}$ span $\Wstar$. 
\citet{ye2026learning} move to the data-driven setting, where the cost vector is random but 
samples are available, and show that directions in $\Wstar$ can be learned 
incrementally from cost samples with provable out-of-sample guarantees. Our work builds on this 
line of research but shifts the focus from information-theoretic sufficiency to optimization: 
we ask what a known $\Wstar$ buys on the solver side, how to learn it under weaker 
geometric assumptions, and how to calibrate a high-coverage working prior when the prior set $\mathcal{C}$ is unknown.

\paragraph{Parametric and multiparametric optimization.}
Repeated optimization with varying parameters has long been studied through parametric programming and sensitivity analysis \citep{Fiacco1983}. In control and explicit model predictive control, a classical strategy is to precompute the full piecewise-affine solution map offline through multiparametric programming \citep{Bemporad2002,Alessio2009,Borrelli2017}. These methods target an explicit representation of the optimizer over a parameter region. Our goal is different: we do not attempt to tabulate the entire solution map, but rather to identify the task-dependent subspace of cost directions that can change the optimizer and to use it for exact low-dimensional reformulation.

\paragraph{Approximate projections for repeated LPs.}
Classical projection methods reduce LP size approximately, either through random projections or other dimension-reduction primitives that preserve feasibility and objective values only approximately \citep{vu2018random,poirion2023randomprojections,dAmbrosio2020}. \citet{sakaue2024projections} study shared data-driven projections for repeated LPs and analyze them through pseudo-dimension bounds, while \citet{iwata2025heterogeneous} learn instance-specific projections for heterogeneous LP families. Related extensions now also exist beyond LPs, for example in quadratic programming \citep{Nguyen2025}. It is, however, unclear whether LP solutions produced by projection approaches correspond to feasible combinatorial solutions for problems represented through LP relaxations, such as shortest path problems, because recovered solutions are not guaranteed to be extreme points or integral for the original LP instance. Our method provably recovers optimal extreme points of the original LP.

\paragraph{Learning to accelerate optimization.}
A broader recent literature uses data to accelerate future solves without explicitly learning a decision-sufficient subspace. Representative directions include learning warm starts for iterative solvers \citep{Sambharya2024}, predicting active sets or optimization strategies \citep{Bertsimas2021,Misra2022}, and amortized optimization methods that map instances directly to approximate solutions \citep{Amos2022,Chen2022}. These approaches can substantially reduce computation, but they typically do not identify a task-dependent exact low-dimensional LP together with a sufficiency guarantee.

\paragraph{Data-driven algorithm design and generalization guarantees.}
\citet{gupta2017pac}, \citet{gupta2020datadrivenalgdesign}, \citet{balcan2017learningtheoreticfoundations}, \citet{balcan2020datadrivenalgdesignchapter}, and \citet{balcan2024howmuchdata} formalize the broader viewpoint that one can learn algorithmic components from representative instances and still obtain out-of-sample guarantees. The LP-projection works above fit this template, with sample complexity bounds obtained via uniform-convergence arguments based on pseudo-dimension. Our framework departs from this approach in two ways. First, the object being learned is different: instead of a generic projection matrix, we learn a specific geometric object, the decision-relevant subspace, and our algorithm grows this subspace one direction at a time, only when a training sample exposes a direction the current subspace misses. Second, the proof technique is different: this incremental structure makes our learner a \emph{stable compression scheme} in the sense of \citet{hanneke2021stablecompression}, which yields a fast $\widetilde O(\dstar/n)$ rate instead of the slower $\widetilde O(1/\sqrt n)$ rate produced by uniform-convergence arguments. A similar gain appears in predict-then-optimize, where sample complexity scales with the dimension of the induced decision class rather than the ambient cost dimension \citep{elbalghiti2023generalization}; we exploit the same idea to cleanly separate the samples spent on calibrating the working prior from the samples spent on discovering decision-relevant directions.

\section{Preliminaries}
We consider the polyhedral LP
\begin{equation}
\label{eq:lp}
    \min_{x\in\X} c^\top x,
    \qquad
    \X:=\{x\in\RR^d:Ax\leq b\},
\end{equation}
where $\X$ is nonempty and bounded and the cost vector $c$ lies in a convex prior set $\C\subseteq\RR^d$. We denote $\mathcal X^\angle$ as the set of extreme points of $\mathcal X$ For $c\in\RR^d$, set $\X^\star(c):=\argmin_{x\in\X}c^\top x$ and $\X^\star(\C):=\bigcup_{c\in\C}\X^\star(c)$. Following \citet{bennouna2025whatdata}, $\mathcal D:=\{q_1,\dots q_r\}\subset \mathbb R^d$ is a sufficient decision dataset (SDD) for $(\X,\C)$ if for any $c\in \mathcal C$ the observation of the evaluations $c^\top q_1,\dots,c^\top q_r$ can be mapped to the solution set $\arg\min_{x\in \mathcal X}c^\top x$, i.e. they contain sufficient information to find the solution to \eqref{LP}. \citet{bennouna2025whatdata} argue that the quantity of interest that determines the information required to solve \eqref{LP} is the decision relevant space $\Wstar$, which is equal to the span of all directions along which reachable optimal solutions can differ. In particular, they provide the following exact characterization.
\begin{theorem}
\label{thm:sdd-subspace}
Assume $\C$ is open and convex. A finite dataset $\mathcal D\subseteq\RR^d$ is an SDD for $(\X,\C)$ if and only if $\Wstar\subseteq\spanop(\mathcal D)$. In particular, the minimum cardinality of an SDD is $\dstar:=\mathrm{dim}\;\Wstar$.
\end{theorem}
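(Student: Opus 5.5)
We prove the two directions separately and then read off the cardinality statement. Throughout, $W:=\Wstar$ and $P_W$ is the orthogonal projection onto $W$. We use the precise meaning of SDD: $\mathcal D$ is an SDD iff $c\mapsto\X^\star(c)$ factors through $c\mapsto(c^\top q)_{q\in\mathcal D}$ on $\C$. Equivalently, for all $c,c'\in\C$, $c-c'\in\spanop(\mathcal D)^\perp$ implies $\X^\star(c)=\X^\star(c')$.

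\emph{Sufficiency.} Assume $W\subseteq\spanop(\mathcal D)$. Take $c,c'\in\C$ with $c-c'\perp\spanop(\mathcal D)$, so $c-c'\perp W$. Fix any $x_0\in\X^\star(\C)$. Every optimal solution for any cost in $\C$ lies in $x_0+W$, so for $x\in\X^\star(c)\cup\X^\star(c')$ we have $(c-c')^\top x=(c-c')^\top x_0$, a constant $\kappa$. We then show $\X^\star(c)=\X^\star(c')$.

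Let $v$ and $v'$ be the optimal values for $c$ and $c'$. Take $x\in\X^\star(c)$ and $x'\in\X^\star(c')$. Then $c'^\top x=c^\top x-\kappa=v-\kappa$ and $c^\top x'=v'+\kappa$. Optimality gives $v'\le v-\kappa$ and $v\le v'+\kappa$, so $v'=v-\kappa$. Hence $x$ attains $v'$ under $c'$, so $x\in\X^\star(c')$; the reverse inclusion is symmetric. This direction uses neither openness nor convexity; it only uses that all solutions for costs in $\C$ lie in $x_0+W$.

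\emph{Necessity.} This is the main obstacle. Suppose $W\not\subseteq\spanop(\mathcal D)$. Then there is $u\in\spanop(\mathcal D)^\perp$ with $u\notin W^\perp$, so there are $x,x'\in\X^\star(\C)$ with $u^\top(x-x')\neq0$. We must produce $c,c'\in\C$ with $c-c'\parallel u$ and $\X^\star(c)\neq\X^\star(c')$.

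Plan: let $x\in\X^\star(c_1)$ and $x'\in\X^\star(c_2)$ with $c_1,c_2\in\C$. Use convexity to consider the segment $c_t=(1-t)c_1+tc_2\subseteq\C$. Since $\X$ is a polytope, the map $t\mapsto\X^\star(c_t)$ is piecewise constant, with finitely many breakpoints where faces change. Along this path the optimal faces change, and the differences of optimal extreme points at consecutive breakpoints span, together, a space containing $x-x'$. So some breakpoint $t^\ast$ has adjacent extreme points $y,y'$ that are both optimal at $c_{t^\ast}$ and satisfy $u^\top(y-y')\neq0$.

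Indeed, $x-x'$ decomposes as a sum of edge differences of optimal extreme points at breakpoints. If $u$ annihilated all of them, it would annihilate $x-x'$. Here we take $x,x'$ to be extreme points, which is possible because $W$ is spanned by differences of optimal extreme points, each optimal face being the convex hull of its extreme points.

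Now $c:=c_{t^\ast}$ lies in the open set $\C$, so $c\pm\varepsilon u\in\C$ for small $\varepsilon>0$. Both $y$ and $y'$ are optimal for $c$. Assume without loss of generality $u^\top y<u^\top y'$. For small $\varepsilon$, the optimal set under $c+\varepsilon u$ is the subface of $\X^\star(c)$ minimizing $u$, which excludes $y'$. So $\X^\star(c+\varepsilon u)\neq\X^\star(c)$, while the two costs agree on every $q\in\mathcal D$. Hence $\mathcal D$ is not an SDD. The delicate point is the path and breakpoint decomposition. An alternative is to argue directly that if all $\X^\star(c')$ for $c'\in c+\spanop(\mathcal D)^\perp$ near every $c$ coincide, then $\X^\star$ is locally constant along $\spanop(\mathcal D)^\perp$ directions. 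In that case, for any extreme point $y$ optimal at some $c$, the directions $y-y'$ over optimal faces are orthogonal to $\spanop(\mathcal D)^\perp$. Connecting by the segment then gives $x-x'\in\spanop(\mathcal D)$ for all pairs, so $W\subseteq\spanop(\mathcal D)$.

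\emph{Cardinality.} If $\mathcal D$ is an SDD, then $\dim\spanop(\mathcal D)\ge\dstar$, so $|\mathcal D|\ge\dstar$. Conversely, any basis of $W$ is an SDD of size $\dstar$ by sufficiency.
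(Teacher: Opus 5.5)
The paper does not prove this theorem; it quotes it from \citet{bennouna2025whatdata}. So there is no in-paper argument to compare against, and your proposal has to stand as a self-contained proof. It does: it is correct.

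\textbf{Sufficiency.} This direction is exactly right. The only input is $\X^\star(\C)\subseteq x_0+\Wstar$, which makes $(c-c')^\top x$ constant on all reachable optimizers. The value-shift argument then forces $\X^\star(c)=\X^\star(c')$. As you say, openness and convexity play no role here.

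\textbf{Necessity.} This direction is also right, and it shows cleanly where each hypothesis enters. Convexity supplies the segment $c_t$, along which $x-x'$ telescopes into differences of vertices that are co-optimal at a single cost. Openness supplies the perturbation $c+\varepsilon u\in\C$. A useful by-product is that $\Wstar$ equals the span of within-face differences $y-y'$ with $y,y'\in\X^\star(c)\cap\X^\angle$ and $c\in\C$.

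\textbf{Places to tighten.}

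\emph{The telescoping chain.} Write it explicitly and include the endpoints. Let $0=s_0<s_1<\dots<s_{m+1}=1$, where $s_1,\dots,s_m$ are the points at which the optimal vertex set changes. There are finitely many, because each $t\mapsto c_t^\top(v-w)$ is affine. Pick $z_i$ optimal on the open interval $(s_i,s_{i+1})$. By continuity, $z_i$ is optimal at both $c_{s_i}$ and $c_{s_{i+1}}$. Then $x-x'=(x-z_0)+\sum_{i=1}^m(z_{i-1}-z_i)+(z_m-x')$, and each pair in this sum is co-optimal at one cost. The words ``adjacent'' and ``edge'' are unnecessary, since the $z_i$ need not share an edge.

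\emph{The final step.} You do not need the small-$\varepsilon$ face description. Since $c^\top y=c^\top y'$, you have $(c+\varepsilon u)^\top(y'-y)=\varepsilon u^\top(y'-y)>0$ for any $\varepsilon>0$ with $c+\varepsilon u\in\C$. Hence $y'\notin\X^\star(c+\varepsilon u)$ while $y'\in\X^\star(c)$.

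\emph{Using both $c\pm\varepsilon u$.} With small $\varepsilon$, the two optimal faces are the $u$-minimizing and $u$-maximizing faces of $\X^\star(c)$. These are disjoint because $u^\top y\neq u^\top y'$. This stronger conclusion would be required if an SDD were only asked to recover one optimal solution rather than the whole optimal set.

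\emph{The closing alternative.} Your closing ``alternative'' is just the contrapositive of the same argument and can be dropped.
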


This characterization has consequences not only from an experimental design perspective, but also for the computational speedup of LPs.
In particular, we show that any sufficient dataset yields a reformulation
of \eqref{LP} whose dimension equals the size of the dataset. The question
we address can be stated rigorously as follows: given an LP of the form
\eqref{LP} with a random cost vector $c$, and given either an uncertainty
set for $c$ or samples from its distribution, how can we construct a
lower-dimensional LP of the form
\begin{equation}
    \min_{y \in \mathcal{X}'} c'^\top y,
\end{equation}
where $c'$ and $\mathcal{X}'$ are built from $c$ and $\mathcal{X}$, such
that $y$ can be mapped to a solution of \eqref{LP} with high probability?

We show that both settings, samples from the distribution of $c$ and
an uncertainty set for $c$, can be handled within the same framework
by reducing the former to the latter. The key step in obtaining a lower-dimensional reformulation
of \eqref{LP} is to construct a sufficient dataset from the data. Two
approaches are available. If an uncertainty set is available and the LP instance is of moderate size or sufficient computational power is available,
\citet{bennouna2025whatdata} formulate a mixed-integer program (MIP)
that iteratively constructs a sufficient dataset. When the instance is
too large for an MIP to be tractable, and data is available, we instead process cost samples sequentially, learning a new decision relevant direction only when the current dataset fails to recover the optimal solution at the incoming sample; the dataset therefore grows only on informative samples and
never exceeds $\dstar$ directions. 

\section{Exact LP Compression via Sufficient-Decision Subspaces}

\subsection{Exact reduction from a sufficient-decision subspace}
\label{subsec:exact-reduction}
The characterization in \cref{thm:sdd-subspace} identifies which cost measurements are sufficient for recovering optimal decisions. We now use the same geometry on the solver side: once a subspace contains the decision-relevant directions, the original LP can be solved exactly on a lower-dimensional affine slice. Proofs and further details of this section are deferred to \Cref{app:proof-exact}.

Fix a prior set $\C$ containing the possible cost vectors. We call an affine subspace $V\subseteq\RR^d$ \emph{exact-sketching} if it preserves the full optimizer set on the prior, namely, for every $c\in\C$,
\[
    \argmin_{x\in\X\cap V} c^\top x
    =
    \argmin_{x\in\X} c^\top x .
\]

\begin{theorem}[Exact LP reduction from a sufficient-decision subspace]
\label{thm:exact-reduced-lp}
An affine subspace $V\subseteq\RR^d$ is exact-sketching on $\C$ if and only if
\[
    \Wstar\subseteq \dir(V)
    \quad\text{and}\quad
    V\cap\X^\star(\C)\ne\emptyset .
\]
\end{theorem}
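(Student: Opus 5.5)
The plan is to prove both directions directly from the definitions of $\X^\star(\C)$ and $\dir(\cdot)$. I do not expect to need \cref{thm:sdd-subspace}, and I do not expect to need openness of $\C$. The only structural facts used are: $\X$ is a nonempty bounded polytope, so $\X^\star(c)\neq\emptyset$ for every $c$; and $\C$ is nonempty. The organizing observation is that both conditions together are equivalent to the inclusion $\X^\star(\C)\subseteq V$, and that this inclusion is exactly what exactness requires.

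\textbf{Necessity.} Assume $V$ is exact-sketching and fix $c\in\C$. Exactness gives $\X^\star(c)=\argmin_{x\in\X\cap V}c^\top x\subseteq V$. Taking the union over $c\in\C$ yields $\X^\star(\C)\subseteq V$.
\begin{itemize}
\item Since $\C\neq\emptyset$ and each $\X^\star(c)$ is nonempty, we get $V\cap\X^\star(\C)\neq\emptyset$.
\item For any $x,x'\in\X^\star(\C)$, both points lie in the affine subspace $V$, so $x-x'\in\dir(V)$.
\item Because $\dir(V)$ is a linear subspace, it contains the span of all such differences, which is $\Wstar$.
\end{itemize}

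\textbf{Sufficiency.} Assume $\Wstar\subseteq\dir(V)$ and pick $x_0\in V\cap\X^\star(\C)$.
\begin{itemize}
\item First show $\X^\star(\C)\subseteq V$. For any $x\in\X^\star(\C)$, the difference $x-x_0$ is a difference of two reachable optimal solutions, so $x-x_0\in\Wstar\subseteq\dir(V)$. Hence $x\in x_0+\dir(V)=V$.
\item Now fix $c\in\C$ and let $z^\star=\min_{x\in\X}c^\top x$. Since $\X\cap V\subseteq\X$, the restricted minimum value is at least $z^\star$.
\item The set $\X^\star(c)$ is nonempty and contained in $\X\cap V$, so the restricted problem attains the value $z^\star$.
\item The two optimal values therefore coincide. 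It follows that $\argmin_{x\in\X\cap V}c^\top x=\X^\star(c)\cap V=\X^\star(c)$.
\end{itemize}

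\textbf{Main obstacle.} There is no genuinely hard step. The point needing care is that the second condition $V\cap\X^\star(\C)\neq\emptyset$ cannot be dropped. The direction condition alone only places $\X^\star(\C)$ inside some translate of $\dir(V)$. The anchor point $x_0$ is what pins that translate to $V$ itself.

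A second point to state explicitly is that nonemptiness of each $\X^\star(c)$ comes from boundedness of $\X$. This guarantees that the restricted LP attains the original optimal value rather than possibly being infeasible or having a strictly larger value.
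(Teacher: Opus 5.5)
Your proof is correct and follows essentially the same route as the paper. For necessity, you show that exactness forces every reachable optimizer into $V$, so all differences lie in $\dir(V)$. For sufficiency, you use the anchor $x_0\in V\cap\X^\star(\C)$ to get $\X^\star(\C)\subseteq V$ and then compare the restricted and original LPs. Your explicit optimal-value comparison fills in the step the paper compresses into ``hence the optimizer sets coincide.'' Your direct use of $\X^\star(c)\subseteq\X^\star(\C)$ also makes the paper's detour through the extreme points of the optimal face unnecessary.
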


We next show how an exact-sketching affine subspace yields a potentially lower-dimensional reformulation of \cref{LP}. Fix an anchor $x_0\in\X$ and a matrix $U\in\RR^{d\times r}$ with linearly independent columns. Let $T_{U,x_0}(z):=x_0+Uz$, let $\mathcal Z_{U,x_0}:=\{z\in\RR^r:T_{U,x_0}(z)\in\X\}$ be the compressed feasible region\footnote{In the polyhedral model $\X=\{x\in\RR^d:Ax\le b\}$, $\mathcal Z_{U,x_0}=\{z\in\RR^r:A(x_0+Uz)\le b\}$.}, and let $F_{U,x_0}(c):=\argmin_{z\in\mathcal Z_{U,x_0}}(U^\top c)^\top z$ be the compressed optimizer set. If $x_0\in\X^\star(\C)$ and $\operatorname{range}(U)\supseteq\Wstar$, then \Cref{thm:exact-reduced-lp} applies to the affine space $x_0+\operatorname{range}(U)$, and the following optimizer-set identity stands for every $c\in\C$:
\begin{equation}
\label{eq:argmin-reduced-lp}
    \X^\star(c)
    =
    T_{U,x_0}\bigl(F_{U,x_0}(c)\bigr).
    \tag{\(\mathsf{Exact}(U,x_0,c)\)}
\end{equation}
Moreover, 
\begin{equation}
\label{eq:exact-reduced-lp}
    \min_{x\in\X} c^\top x
    =
    c^\top x_0+
    \min_{z\in\mathcal Z_{U,x_0}}(U^\top c)^\top z.
\end{equation}
Thus solving the $r$-variable LP in \eqref{eq:exact-reduced-lp} and lifting by $T_{U,x_0}$ recovers an optimal solution of the original LP instance. 
To guarantee $\mathsf{Exact}(U,x_0,c)$ stands for every $c\in\C$, the minimal choice $\operatorname{range}(U)=\Wstar$ gives an exact $\dstar$-variable affine reformulation. This however can be both NP and coNP-hard ~\citep{ye2026learning}, even though it can be done using the MIP formulation in \citet{bennouna2026datainformativeness}. Our approach aims to find $U$ such that $\mathsf{Exact}(U,x_0,c)$ holds with high probability under the distribution of $c$. This allows us to design a tractable algorithm to do so. 

The next result further shows that vertex optimizers are preserved if $\mathsf{Exact}(U,x_0,c)$ holds, which is useful in real-world applications.


\begin{corollary}[Vertex optimizers are preserved]
\label{cor:exact-reduced-vertices}
\label{thm:exact-reduced-vertices}
If $\mathsf{Exact}(U,x_0,c)$ holds, then
\[
    T_{U,x_0}\!\left(
        F_{U,x_0}(c)
        \cap \mathcal Z_{U,x_0}^\angle
    \right)
    =
    \X^\star(c)\cap\X^\angle .
\]
\end{corollary}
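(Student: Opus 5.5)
The plan is to exploit the fact that $T:=T_{U,x_0}$ is an injective affine map, because $U$ has linearly independent columns. Restricted to $\mathcal Z:=\mathcal Z_{U,x_0}$, it is an affine bijection onto the slice $\X\cap V$, where $V:=x_0+\operatorname{range}(U)$. Affine bijections between convex sets preserve extreme points in both directions. Hence
\[
T(\mathcal Z^\angle)=(\X\cap V)^\angle .
\]
Applying $T$ to both sides of the hypothesis $\mathsf{Exact}(U,x_0,c)$ after intersecting with $\mathcal Z^\angle$ then gives
\[
T\bigl(F_{U,x_0}(c)\cap\mathcal Z^\angle\bigr)=T\bigl(F_{U,x_0}(c)\bigr)\cap T(\mathcal Z^\angle)=\X^\star(c)\cap(\X\cap V)^\angle .
\]
The first equality uses injectivity of $T$. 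It therefore remains to show
\[
\X^\star(c)\cap(\X\cap V)^\angle=\X^\star(c)\cap\X^\angle .
\]

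For this identity I will use the fact that $\X^\star(c)$ is a face $G$ of $\X$, and that $G\subseteq\X\cap V$ by $\mathsf{Exact}(U,x_0,c)$, since $\X^\star(c)=T(F_{U,x_0}(c))\subseteq T(\mathcal Z)=\X\cap V$.

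The inclusion $\supseteq$ is immediate. If $x\in G$ is extreme in $\X$, it is extreme in the subset $\X\cap V$ that contains it.

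The inclusion $\subseteq$ is the main step. Take $x\in G$ extreme in $\X\cap V$, and suppose $x=\tfrac12(y+z)$ with $y,z\in\X$. Because $G$ is a face of $\X$ (the minimizing face of $c^\top$), $y$ and $z$ lie in $G$. Indeed, $c^\top y,c^\top z\ge c^\top x$ and their average equals $c^\top x$, so both attain the minimum. Then $G\subseteq\X\cap V$ puts $y,z\in\X\cap V$, and extremality of $x$ in $\X\cap V$ forces $y=z=x$. So $x\in\X^\angle$.

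This face argument is the only nontrivial point: a vertex of the slice need not be a vertex of $\X$ in general, and the optimal face being fully contained in the slice is what rules this out. I also need to check that the extreme-point correspondence under $T$ uses only affineness and injectivity. This holds because $T$ maps midpoints to midpoints, and $T^{-1}$ is affine on $\X\cap V$. That completes the plan; no polyhedral rank computations are needed.
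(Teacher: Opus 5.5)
Your proof is correct and is essentially the paper's argument. Both rest on $T_{U,x_0}$ being an affine bijection from $\mathcal Z_{U,x_0}$ onto the slice $\X\cap V$, and on the fact that extreme points of the optimal face $\X^\star(c)$ are extreme in the ambient set. The only difference is bookkeeping: the paper transports extreme points face-to-face ($F_{U,x_0}(c)\to\X^\star(c)$) and applies the face/extreme-point identity on both sides. You instead transport $\mathcal Z_{U,x_0}^\angle$ to $(\X\cap V)^\angle$ and then verify $\X^\star(c)\cap(\X\cap V)^\angle=\X^\star(c)\cap\X^\angle$ directly with your minimizer/midpoint argument.
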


Thus, if a reduced LP solver returns a vertex optimizer in $\mathcal Z_{U,x_0}$ under the exactness event $\mathsf{Exact}(U,x_0,c)$, its lift is a vertex optimizer of the original LP. This is useful for combinatorial problems.  For example, shortest path, min-cost flow, and assignment problems admit LP formulations with totally unimodular constraints, so lifting a reduced vertex optimizer yields an integral optimal solution of the original problem.
\paragraph{Comparison with homogeneous linear projection reductions.}
Projection-based LP reductions such as \citet{sakaue2024projections} choose a projection matrix $P\in\RR^{d\times k}$ and substitute $x=Py$, restricting the feasible set to the linear slice $\X\cap\operatorname{range}(P)$ through the origin. Our reduction instead uses an \emph{affine} slice $\X\cap(x_0+\operatorname{range}(U))$ anchored at a reachable optimal point $x_0\in\X^\star(\C)$. This difference matters in two ways. First, we know exactly how to choose the slice: setting $\operatorname{range}(U)=\Wstar$ yields an exact and minimal reformulation in $\dstar$ variables. Projection methods, by contrast, optimize $P$ heuristically and offer no characterization of which $k$-dimensional subspace is sufficient, nor any guarantee that their learned $P$ recovers a sufficient one. Second, even an optimally chosen linear subspace would need one additional variable: the smallest linear subspace containing $x_0+\Wstar$ is $\spanop\{x_0\}+\Wstar$, whose dimension can be as large as $\dstar+1$. A homogeneous projection therefore spends an extra dimension just to encode the anchor offset, which the affine formulation handles for free. \Cref{ex:affine-vs-linear-projection} illustrates this gap.

\begin{example}[Affine compression can beat homogeneous projection]
\label{ex:affine-vs-linear-projection}
Let $\X=[-1,1]^2$ and $\C_\rho=(-1-\rho,-1)\times(-1,1)$ for $\rho>0$. Since every $c\in\C_\rho$ has $c_1<0$, all optimizers lie on the right edge: $\X^\star(\C_\rho)=\{1\}\times[-1,1]$ and $\dir(\X^\star(\C_\rho))=\spanop\{(0,1)\}$. Thus the anchored line $x=(1,0)+z(0,1)$, $z\in[-1,1]$, is an exact one-variable reduction. In contrast, a one-dimensional homogeneous slice through the origin cannot contain the full right edge; if $c$ is uniform on $\C_\rho$, every such slice incurs expected objective gap at least $1/2$ (verified in \Cref{app:proof-exact}).
\end{example}

\begin{figure}
    \centering
    \begin{tikzpicture}[scale=1.2,>=stealth, font=\footnotesize]
 
\begin{scope}
  \draw[->,gray!70] (-1.8,0) -- (1.8,0) node[right] {$c_1$};
  \draw[->,gray!70] (0,-1.8) -- (0,1.8) node[above] {$c_2$};
 
  \fill[blue!12] (-1.7,-1) rectangle (-1,1);
  \draw[blue!60, thick, dashed] (-1.7,-1) rectangle (-1,1);
 
  \draw[blue!60, thick, dashed] (-1,-1) -- (-1,1);
 
  \draw[<->, blue!60!black, thin] (-1.7,-1.55) -- (-1,-1.55);
  \node[blue!60!black, below] at (-1.35,-1.6) {$\rho$};
 
  \draw[gray!70] (-1,0.05) -- (-1,-0.05);
  \draw[gray!70] (-1.7,0.05) -- (-1.7,-0.05);
  \node[gray!50!black, anchor=north] at (-1,-1.05) {$-1$};
  \node[gray!50!black, anchor=north] at (-1.7,-1.05) {$-1\!-\!\rho$};
  \draw[gray!70] (0.05,1) -- (-0.05,1);
  \node[gray!50!black, anchor=west] at (0.07,1) {$1$};
  \draw[gray!70] (0.05,-1) -- (-0.05,-1);
  \node[gray!50!black, anchor=west] at (0.07,-1) {$-1$};
 
  \node[blue!70!black] at (-0.65,0.3) {$\mathcal{C}_\rho$};
  \node at (0,-2.0) {\textbf{Cost space}};
\end{scope}
 
\begin{scope}[xshift=6.2cm]
  \draw[->,gray!70] (-1.8,0) -- (2.4,0) node[right] {$x_1$};
  \draw[->,gray!70] (0,-1.8) -- (0,2.2) node[above] {$x_2$};
 
  \fill[gray!10] (-1,-1) rectangle (1,1);
  \draw[thick] (-1,-1) rectangle (1,1);
 
  \draw[orange!80!black, dashed, thick] (-1.7,-0.85) -- (1.7,0.85);
  \draw[orange!80!black, very thick] (-1,-0.5) -- (1,0.5);
  \node[orange!80!black] at (0.3,0.4) {$L$};
 
  \draw[red, line width=2pt] (1,-1) -- (1,1);
 
  \filldraw[orange!80!black] (1,0.5) circle (1.4pt);
 
  \draw[gray!70] (1,0.05) -- (1,-0.05);
  \draw[gray!70] (-1,0.05) -- (-1,-0.05);
  \node[gray!50!black, anchor=north] at (1,-1.05) {$1$};
  \node[gray!50!black, anchor=north] at (-1,-1.05) {$-1$};
  \draw[gray!70] (0.05,1) -- (-0.05,1);
  \draw[gray!70] (0.05,-1) -- (-0.05,-1);
  \node[gray!50!black, anchor=east] at (-1.05,1) {$1$};
  \node[gray!50!black, anchor=east] at (-1.05,-1) {$-1$};
 
  \node[red, anchor=west] at (1,-0.5) {$\mathcal{X}^\star(\mathcal{C}_\rho)$};
 
  \filldraw (0,0) circle (0.8pt);
  \node[below left=-1pt] at (0,0) {$0$};
 
  \node at (0.3,-2.2) {\textbf{Decision space}};
\end{scope}
 
\end{tikzpicture}
    \caption{Affine exact compression versus a homogeneous one-dimensional slice.}
    \label{fig:affine-vs-linear-projection}
\end{figure}

\subsection{Learning exact compression from samples}
\label{subsec:adj-learning}
We now describe the sample-based learning algorithm (\cref{alg:adj-cumulative}). It maintains a matrix $U$ of decision-relevant directions discovered so far; together with a fixed anchor $x_0\in\X^\star(\C)$, obtained by picking any $c_0\in\C$ and taking $x_0\in\X^\star(c_0)$, it defines the candidate compressed feasible region $x_0+\operatorname{range}(U)$. Training costs $c_1,\dots,c_n$ are processed one at a time, and $U$ grows whenever a new sample reveals a direction the current slice misses. For each $c_i$, the algorithm checks whether the optimal face $\X^\star(c_i)$ is contained in the current slice $x_0+\operatorname{range}(U)$. If yes, it proceeds to $c_{i+1}$. If no, then since $\X^\star(c_i)$ is the convex hull of its vertices, some vertex $x$ lies outside the slice; the algorithm appends $x-x_0$ as a new column of $U$ and rechecks. This test-and-append loop repeats until the full optimal face is contained. Samples that trigger at least one append are called \emph{hard instances}.


\begin{algorithm}[t]
\caption{Sample-based learning algorithm}
\label{alg:adj-cumulative}
\begin{algorithmic}[1]
\Require Feasible polytope $\X$, fixed reachable anchor $x_0\in\X^\star(\C)$, samples $c_1,\ldots,c_n$, and deterministic tie-breaking for all choices.
\Ensure A compression matrix $U_n$ and a hard-sample index set $T_n$.
\State Initialize $U_0\gets [\,]\in\RR^{d\times 0}$ and $T_n\gets\emptyset$.
\For{$i=1,\ldots,n$}
    \State Set $U_i\gets U_{i-1}$.
    \While{$\X^\star(c_i)\nsubseteq x_0+\operatorname{range}(U_i)$}
        \State Choose $x\in\X^\star(c_i)\cap\X^\angle$ with $x-x_0\notin\operatorname{range}(U_i)$.
        \State $U_i\gets [\,U_i\;\;x-x_0\,]$ and $T_n\gets T_n\cup\{i\}$.
    \EndWhile
\EndFor
\State \Return $U_n$ and $T_n$.
\end{algorithmic}
\end{algorithm}

\begin{theorem}[Exact-compression certificate and tractability]
\label{thm:adj-cumulative-learning}
Fix an anchor $x_0\in\X^\star(\C)$ independently of the $n$ training costs.  Run \Cref{alg:adj-cumulative} on $n$ i.i.d. costs drawn from a distribution supported on $\C$.  Then
\[
    \operatorname{range}(U_n)\subseteq\Wstar,
    \qquad
    |T_n|\le \operatorname{rank}(U_n)\le\dstar,
\]
and $\mathsf{Exact}(U_n,x_0,c_i)$ holds for every training sample $i=1,\ldots,n$.  Moreover, for any $\delta\in(0,1)$, with probability at least $1-\delta$,
\begin{equation}
\label{eq:fast-rate}
    \mathbb P_{c\sim P_c}\!\left[\mathsf{Exact}(U_n,x_0,c)\right]
    \ge
    1-\frac{4}{n}\bigl(6|T_n|+\log(e/\delta)\bigr)
    \ge
    1-\frac{4}{n}\bigl(6\dstar+\log(e/\delta)\bigr).
\end{equation}

\end{theorem}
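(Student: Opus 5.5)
The plan is to separate the deterministic claims, which follow from the structure of \Cref{alg:adj-cumulative}, from the probabilistic bound \eqref{eq:fast-rate}, which I would obtain by casting the algorithm as a stable sample compression scheme in the sense of \citet{hanneke2021stablecompression}.

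\textbf{Deterministic part.} Every appended column has the form $x-x_0$ with $x\in\X^\star(c_i)\subseteq\X^\star(\C)$ (since $c_i\in\C$) and $x_0\in\X^\star(\C)$. Hence each column lies in $\Wstar$, and so $\operatorname{range}(U_n)\subseteq\Wstar$.

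A column is appended only when $x-x_0\notin\operatorname{range}(U_i)$. So the columns are linearly independent, and their number equals $\operatorname{rank}(U_n)\le\dstar$. Each hard index adds at least one column, which gives $|T_n|\le\operatorname{rank}(U_n)$.

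The while loop must terminate, because the rank strictly increases and is bounded by $d$. Termination is also well-defined: $\X^\star(c_i)$ is the convex hull of its vertices, so if the face is not contained in the affine slice, some vertex lies outside it.

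For exactness I would use a one-instance lemma. For a fixed $c$ and $V=x_0+\operatorname{range}(U)$, the condition $\mathsf{Exact}(U,x_0,c)$ holds if and only if $\X^\star(c)\subseteq V$. In the forward direction, $\X^\star(c)\subseteq V\cap\X$ forces the minimum over $\X\cap V$ to equal the minimum over $\X$. Therefore
\[
\argmin_{\X\cap V}c^\top x=\X^\star(c)\cap V=\X^\star(c),
\]
and pulling back through the injective affine map $T_{U,x_0}$ gives \eqref{eq:argmin-reduced-lp}. The converse is immediate. Since the slices $x_0+\operatorname{range}(U_i)$ are nested and increasing in $i$, the containment achieved at the end of step $i$ persists up to $U_n$. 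This gives exactness on every training sample.

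\textbf{Generalization.} Define the loss of a hypothesis $U$ on $c$ as $\mathbf 1[\X^\star(c)\nsubseteq x_0+\operatorname{range}(U)]$. With $x_0$ fixed independently of the sample, this is a fixed binary loss. Take the compression set to be $\{c_i:i\in T_n\}$ (kept in order) and the reconstruction map to be ``run \Cref{alg:adj-cumulative} on the given sequence.'' The argument then has three steps.

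\begin{enumerate}[leftmargin=1.4em,itemsep=2pt,topsep=2pt]
\item \emph{Reconstruction is exact.} Non-hard samples leave $U$ unchanged. Running the algorithm on the hard samples alone therefore passes through exactly the same sequence of states $U$, and with deterministic tie-breaking it appends exactly the same columns. So it reproduces $U_n$.
\item \emph{Stability.} Deleting any subset of non-hard samples leaves the trajectory, and hence both $T_n$ and $U_n$, unchanged. This is precisely the stability property: the compression set is invariant under removing points outside it.
\item \emph{Consistency.} By the deterministic part, the output has zero empirical loss.
\end{enumerate}

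The stable-compression bound of \citet{hanneke2021stablecompression} (the Bousquet--Hanneke--Moran--Zhivotovskiy form) for a consistent stable scheme of data-dependent size $k=|T_n|$ then yields error at most $\frac4n(6k+\log(e/\delta))$ with probability at least $1-\delta$. Finally, $|T_n|\le\dstar$ gives the second inequality.

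\textbf{Main obstacle.} I expect the delicate step to be checking the hypotheses of the stable-compression theorem rigorously. This requires that the reconstruction depends only on the ordered compressed subsequence, and that ``removing non-compressed points'' is understood for arbitrary subsets of the sample, not just single points. To handle this, I would first prove an explicit invariance lemma: for any subsequence $S'$ with $\{c_i:i\in T_n\}\subseteq S'\subseteq S$, the run on $S'$ has the same $U$-trajectory at every hard index. The proof is by induction on the hard indices, using that between consecutive hard samples the state $U$ is frozen. The bound should also be stated in its data-dependent-size version, so that $|T_n|$ rather than $\dstar$ appears in the first inequality.
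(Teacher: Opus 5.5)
Your proposal is correct and follows essentially the same route as the paper. Both use the characterization $\mathsf{Exact}(U,x_0,c)\iff\X^\star(c)\subseteq x_0+\operatorname{range}(U)$ with monotonicity under appending columns, and the linear-independence count giving $|T_n|\le\operatorname{rank}(U_n)\le\dstar$. Both then prove, by induction over hard indices, that rerunning on the hard subsequence (or after deleting any non-hard samples) reproduces $U_n$, and finish with the stable realizable compression bound of \citet{hanneke2021stablecompression} with data-dependent size $|T_n|$. The only thing the paper adds is a justification of the ``tractability'' in the theorem's title, which is not among the displayed claims: each containment test and outside-vertex choice reduces to LPs over the optimal face $\{x\in\X: c_i^\top x=v(c_i)\}$ against a basis of $\operatorname{range}(U_i)^\perp$, with at most $n+\dstar$ tests in total.
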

Note that \cref{alg:adj-cumulative} only requires LP solves and hence runs in polynomial time.

\paragraph{Sample-free construction and the computational tradeoff.} The sample-based learner is complementary to the deterministic alternative. When $\C$ is convex, \citet{bennouna2025whatdata} construct the decision-relevant subspace directly from the known pair $(\X,\C)$ using an iterative mixed-integer program, requiring no cost samples and yielding a global SDD. The catch is computational: \citet{ye2026learning} show that, for open convex $\C$, constructing a minimum-size global SDD and even computing its minimum size are NP-hard and coNP-hard. Our sample-based learner trades this combinatorial hardness for a statistical cost: each update requires only LP solves, optimal-face containment tests, and rank tests, but the algorithm needs representative cost samples to discover the relevant directions. Once the sampled directions span $\Wstar$, the compression is globally exact (\Cref{thm:exact-reduced-lp}); until then, \Cref{thm:adj-cumulative-learning} bounds the probability that the current compression fails on a fresh sample.


\subsection{Ruling out instances that are too hard}
\label{subsec:prior-estimation}
\cref{alg:adj-cumulative} does not require knowledge of $\mathcal{C}$. Given enough samples, the algorithm learns an exact reformulation of (\ref{LP}) for $c$ ranging over the \emph{entire} support of its distribution, which is often too conservative. A natural way to address this is to rule out training instances that are \emph{too hard}, i.e. so unlikely to occur that the extra dimensions needed to handle them exactly are not worth the cost. This also allows to potentially trade optimality for LP compression by only learning the most important decision relevant directions. To do so, we construct a confidence set $\hat{\mathcal C}$ from the data, and only learn the relevant decision directions for the cost vector samples that are inside the confidence set, which we define in the following.

\begin{definition}[$(\rho,\delta_0)$-estimated prior]
\label{def:estimated-prior}
Let $P_c$ be the cost distribution.  A pilot-measurable random convex set $\widehat\C\subseteq\RR^d$ is a $(\rho,\delta_0)$-estimated prior if
\[
    \mathbb P_{\rm pilot}\left[P_c(c\notin\widehat\C)\le \rho\right]
    \ge 1-\delta_0 .
\]
\end{definition}

We can construct a convex estimated prior $\widehat\C_{\rho,\delta_0}$ satisfying \Cref{def:estimated-prior}; the distribution-free construction and its proof are deferred to \Cref{app:estimated-prior-construction}.

We next state the learning guarantee for this new framework.  Let $\widehat\C$ be a $(\rho,\delta_0)$-estimated prior independent of the representation-learning sample, and define $\mathcal E_0:=\{P_c(c\notin\widehat\C)\le\rho\}$.  On $\mathcal E_0$, set $P_{\widehat\C}:=P_c(\cdot\mid c\in\widehat\C)$ and $\widehat d_\star:=\dim\dir(\X^\star(\widehat\C))$.  Choose an anchor $\widehat x_0\in\X^\star(\widehat\C)$ by selecting any $\widehat c_0\in\widehat\C$ and solve the original LP.  Run \Cref{alg:adj-cumulative} with anchor $\widehat x_0$ on the first $n_1$ samples that fall inside $\widehat\C$; conditional on the realized $\widehat\C$ and on retaining $n_1$ samples, these retained costs are i.i.d. from $P_{\widehat\C}$.  Let $U_{n_1}$ be the output compression matrix.  


\begin{theorem}[Learning with a calibrated estimated prior]
\label{thm:calibrated-prior-learning}
Under the preceding setup, for any $\delta_1\in(0,1)$, with probability at least $1-\delta_0-\delta_1$ over the pilot and retained representation-learning samples,
\begin{equation}
\label{eq:calibrated-prior-risk}
    \mathbb P_{c\sim P_c}\!\left[
        \mathsf{Exact}(U_{n_1},\widehat x_0,c)
    \right]
    \ge
    1-\rho-
    \frac{4}{n_1}\bigl(6\widehat d_\star+\log(e/\delta_1)\bigr).
\end{equation}
Consequently, taking $\widehat\C=\widehat\C_{\rho,\delta_0}$ from \Cref{prop:tolerance-estimated-prior} gives the same distribution-free unknown-prior certificate.
\end{theorem}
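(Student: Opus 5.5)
The argument conditions on the pilot sample, applies \Cref{thm:adj-cumulative-learning} conditionally on the realized estimated prior, and finishes with a union bound and the law of total probability.

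\emph{Step 1: conditioning.} Fix a realization of the pilot data for which $\mathcal E_0$ holds; by \Cref{def:estimated-prior} this happens with probability at least $1-\delta_0$. Conditional on the pilot, $\widehat\C$ is a fixed convex set. The anchor $\widehat x_0\in\X^\star(\widehat\C)$ is a deterministic function of $\widehat\C$, so it is independent of the representation-learning sample. As stated in the setup, the $n_1$ retained costs are i.i.d. from $P_{\widehat\C}$, which is supported on $\widehat\C$.

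\emph{Step 2: apply the earlier certificate.} We invoke \Cref{thm:adj-cumulative-learning} with prior set $\widehat\C$, distribution $P_{\widehat\C}$ and anchor $\widehat x_0$. The decision-relevant dimension is then $\widehat d_\star=\dim\dir(\X^\star(\widehat\C))$, so with conditional probability at least $1-\delta_1$,
\[
P_{c\sim P_{\widehat\C}}\bigl[\mathsf{Exact}(U_{n_1},\widehat x_0,c)\bigr]\ge 1-\tfrac{4}{n_1}\bigl(6\widehat d_\star+\log(e/\delta_1)\bigr).
\]
The point to check is that the proof of \Cref{thm:adj-cumulative-learning} uses only two facts:
\begin{itemize}
\item every sampled cost lies in the prior set, so each appended column $x-\widehat x_0$ is a difference of two points of $\X^\star(\widehat\C)$. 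This gives $\operatorname{range}(U)\subseteq\dir(\X^\star(\widehat\C))$ and hence $|T_{n_1}|\le\widehat d_\star$;
\item the stable-compression bound of \citet{hanneke2021stablecompression} holds for i.i.d. samples.
\end{itemize}
Neither fact requires $\widehat\C$ to be open or equal to the true support, so the theorem transfers verbatim. This transfer is the step most in need of care, since the earlier result was phrased with $x_0\in\X^\star(\C)$ for the support $\C$.

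\emph{Step 3: decompose the unconditional probability.} For a fresh $c\sim P_c$, write $p=P_c(c\in\widehat\C)\ge 1-\rho$ on $\mathcal E_0$. Then
\[
P_c[\mathsf{Exact}]\ \ge\ P_c[\mathsf{Exact},\,c\in\widehat\C]\ =\ p\,P_{\widehat\C}[\mathsf{Exact}]\ \ge\ P_{\widehat\C}[\mathsf{Exact}]-(1-p).
\]
The last inequality uses $pa\ge a-(1-p)$ for $a\le1$. Combining it with $1-p\le\rho$ and Step 2 gives \eqref{eq:calibrated-prior-risk}.

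\emph{Step 4: union bound and corollary.} The failure events are $\mathcal E_0^c$, of probability at most $\delta_0$, and the conditional failure in Step 2, of probability at most $\delta_1$. A union bound, integrating the conditional statement over the pilot, yields overall probability at least $1-\delta_0-\delta_1$. The final sentence of the theorem then follows by plugging in the construction of \Cref{prop:tolerance-estimated-prior}, which is a $(\rho,\delta_0)$-estimated prior by definition.
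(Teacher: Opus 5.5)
Your proposal is correct and follows the paper's proof essentially step for step. Both arguments condition on a pilot realization in $\mathcal E_0$, apply Theorem~\ref{thm:adj-cumulative-learning} to $(\widehat\C, P_{\widehat\C}, \widehat x_0)$ to get $\widehat d_\star$, split the unconditional probability according to whether $c\in\widehat\C$, and union-bound the $\delta_0$ and $\delta_1$ failure events. Your success-probability form $pa\ge a-(1-p)$ of the decomposition and your explicit check that the earlier theorem needs neither openness of $\widehat\C$ nor $\widehat\C$ being the true support are only cosmetic additions to the same argument.
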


The first term, $\rho$, is the probability mass intentionally left outside the calibrated estimated prior, while the second term is the stable-compression risk for costs drawn from the conditional distribution inside the realized prior. The parameter $\rho$ controls the degree to which low-probability instances are discarded: the larger $\rho$, the smaller the dimension $\widehat{d}_\star$ of the learned decision-relevant subspace, which would result in a more compressed LP reformulation. This lets us calibrate $\rho$ to prioritize optimality (small $\rho$) or compression (large $\rho$). In some cases, discarding rare instances costs very little in optimality while yielding substantial gains in compression, as we show in \Cref{sec:experiments}.

\section{Numerical Experiments}\label{sec:experiments}
We evaluate the sample-based exact-compression framework in the unknown-prior setting of \Cref{subsec:prior-estimation}.  The learner first builds a calibrated convex working prior from historical cost samples and then learns the decision-relevant directions inside that set. The main text reports the three diagnostics that directly test this mechanism: how the dimensions of the learned exact reformulation grow with samples, how performance changes as the target outside-mass level $\rho$ is varied, and how the calibrated exact method compares with a neural projection baseline as the number of training costs changes.  Additional experiments and details are reported in \Cref{app:beyond-prior-experiment}.

\subsection{Experimental protocol}
We compare our calibrated exact-compression method, denoted \textsc{OursEstC}, with two projection-learning baselines.  The first is \textsc{DataDrivenProj}, a PCA-based data-driven projection method in the spirit of \citet{sakaue2024projections}: it learns a shared $K$-dimensional projection from training LP instances and then solves the projected LP at test time.  The second is \textsc{FCNN-c}, a cost-only neural projection baseline following the neural projection paradigm of \citet{iwata2025heterogeneous}: it maps the observed cost vector to an instance-specific projection and solves the resulting reduced LP. 

For \textsc{OursEstC}, we construct an estimated prior of the form $\widehat\C=\{c:s_{\widehat\theta}(c)\le S_{(k)}\}$, where $s_{\widehat\theta}$ is the ridge Mahalanobis score from \Cref{app:estimated-prior-construction} and $S_{(k)}$ is the calibration order statistic associated with the target outside-mass level $\rho$.  The radius is therefore calibrated from empirical scores.  After forming $\widehat\C$, we run \Cref{alg:adj-cumulative} on the retained training costs to obtain an affine exact-compression matrix of learned dimension $d(\rho)$. The benchmark suite contains the synthetic repeated-LP families and selected Netlib instances used throughout the paper\footnote{Details of the LP instances and cost distributions are deferred to Appendix~\ref{app:beyond-prior-experiment}.}.  Accuracy is measured by the average test objective ratio, i.e., the objective value returned by a method divided by the full-LP objective value on the same test instance. The plotted comparisons use twelve retained test seeds. Shaded regions denote standard errors over random seeds.

\paragraph{Experiment 1: dimension growth under a calibrated estimated prior.}
\begin{figure*}[t]
\centering
\includegraphics[width=0.97\textwidth]{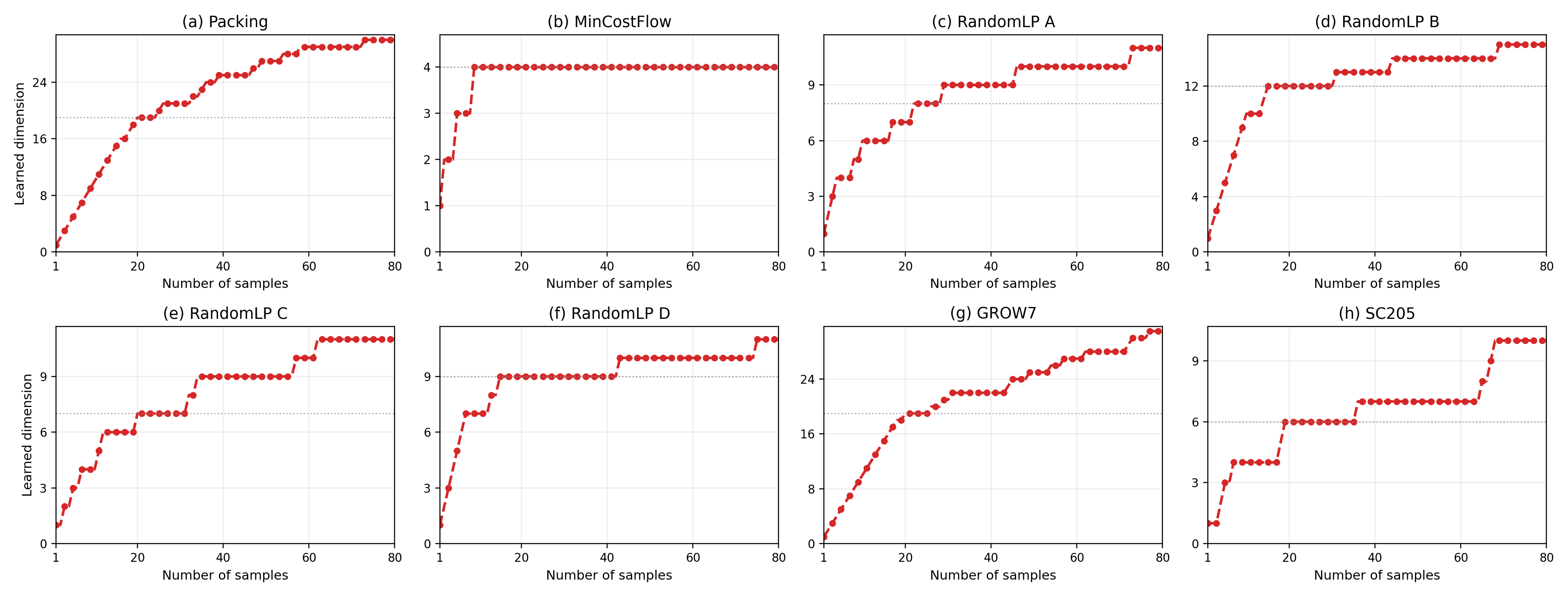}
\caption{Dimension growth of \textsc{OursEstC}.}
\label{fig:beyond-prior-rank-growth}
\end{figure*}

\Cref{fig:beyond-prior-rank-growth} reports the dimension learned by \textsc{OursEstC} as more historical costs are processed.  This experiment uses the calibrated estimated prior with $\rho=0.1$.  The quantity plotted is the dimension for the compressed LP.
The figure shows that the learned rank stabilizes quickly on several instances, especially MinCostFlow and the RandomLP families, while Packing and GROW7 require larger subspaces.  This behavior is exactly what the estimated-prior construction is meant to reveal: the learned dimension is not the ambient cost dimension, but the intrinsic dimension of the optimizer variation inside the calibrated high-mass region.  Thus, a small number of informative samples can often identify the relevant affine slice, whereas difficult instances simply manifest themselves as the training proceeds.

\paragraph{Experiment 2: compression--coverage tradeoff through $\rho$.}
\begin{figure*}[t]
\centering
\includegraphics[width=0.97\textwidth]{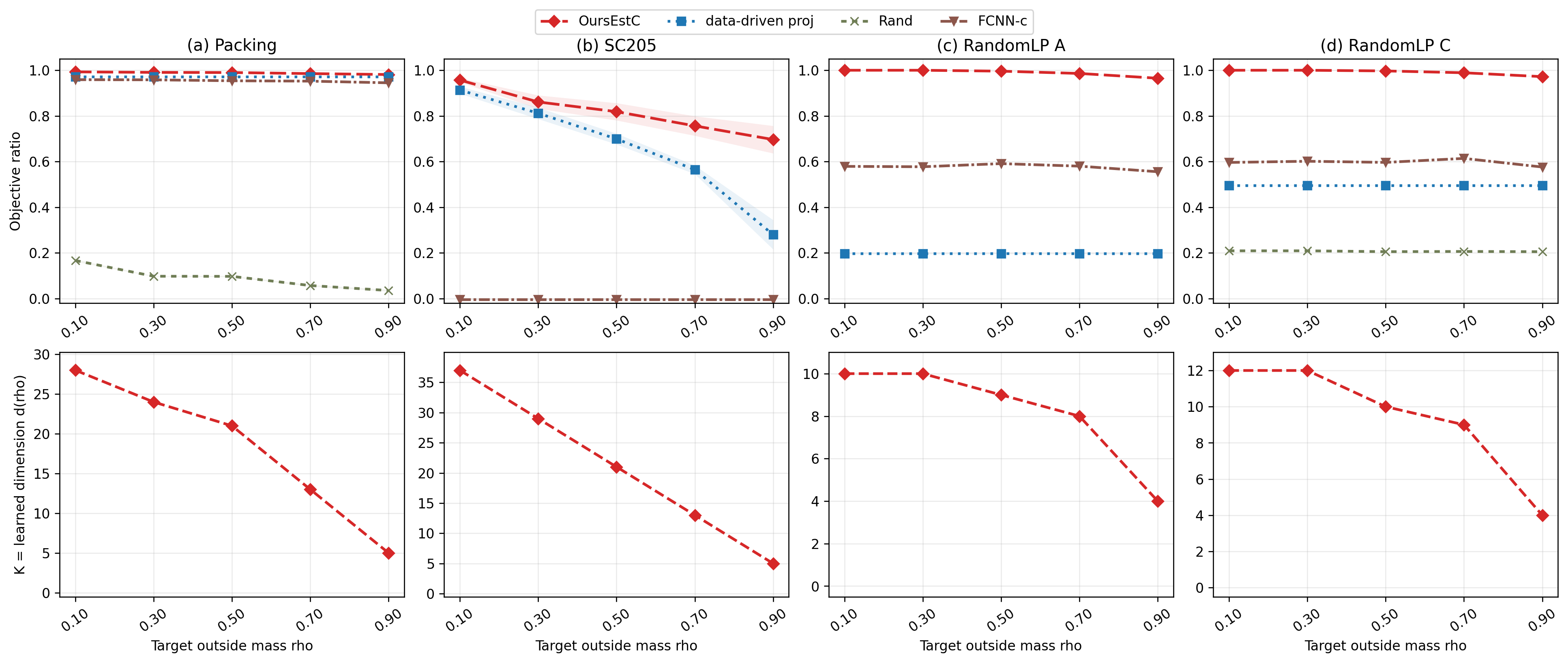}
\caption{Comparison across target outside-mass levels. Rand corresponds to random projections.}
\label{fig:beyond-prior-dynamic-k-rho}
\end{figure*}

\Cref{fig:beyond-prior-dynamic-k-rho} varies the target outside-mass level $\rho$.  For each $\rho$, \textsc{OursEstC} constructs a different calibrated set $\widehat\C_\rho$, learns its decision-relevant dimension $d(\rho)$, and solves the corresponding affine reduced LP.  The projection baselines are then given the same dimensional budget $K=d(\rho)$, so the comparison isolates whether the learned dimensions are being used in a decision-sufficient way.

This experiment illustrates the main message of \Cref{subsec:prior-estimation}.  Increasing $\rho$ intentionally discards more low-probability cost mass, which can lower $d(\rho)$ and hence produce a smaller reduced LP.  The empirical curves show that this compression can be obtained with little loss on many test instances, while the projection baselines often fail to match the exact method even at the same dimension.  In other words, the estimated prior does not merely choose a smaller $K$; it directs the learner toward the optimizer-changing directions inside the high-mass region.

\paragraph{Experiment 3: comparison against neural projections.}
\begin{figure*}[t]
\centering
\includegraphics[width=0.97\textwidth]{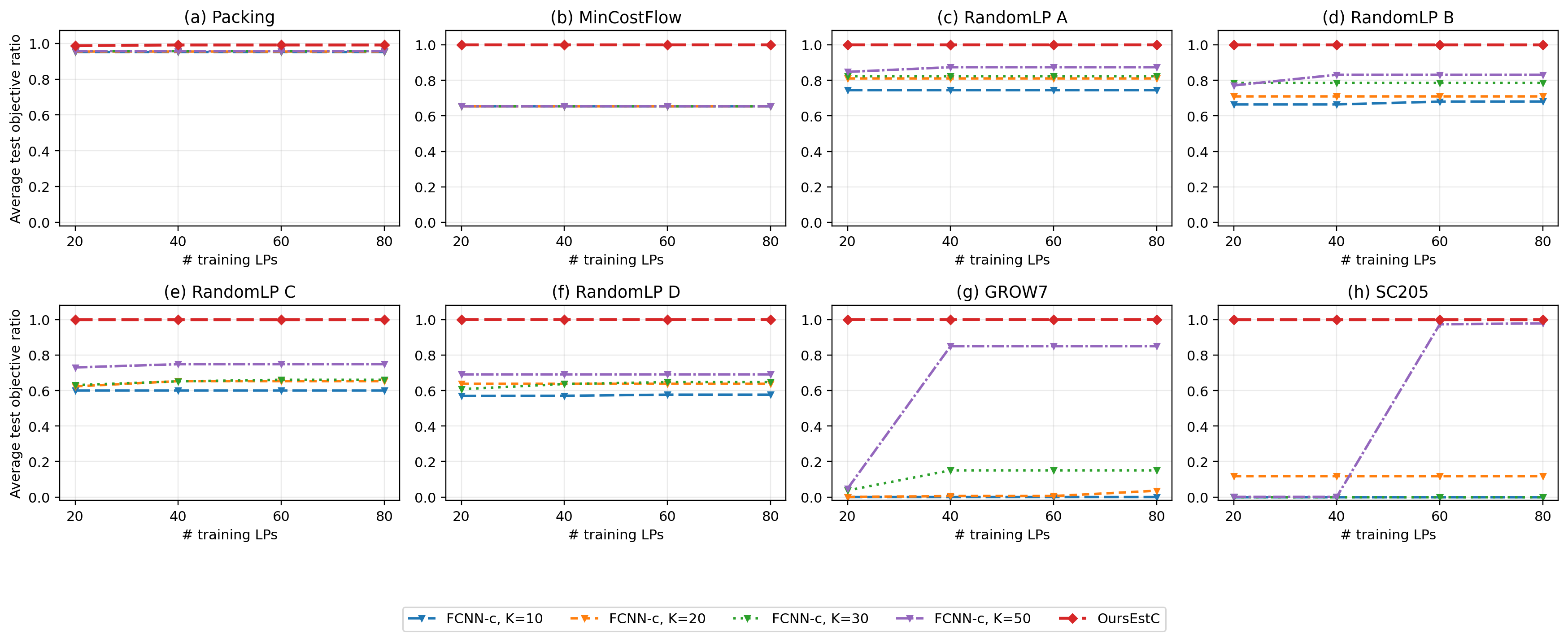}
\caption{Comparison against the \textsc{FCNN-c} baseline.}
\label{fig:beyond-prior-costonly-samples}
\end{figure*}

\Cref{fig:beyond-prior-costonly-samples} compares the calibrated exact method with the plain-random \textsc{FCNN-c} baseline as the number of training costs varies.  For \textsc{FCNN-c}, each dashed curve reports the objective ratio using the displayed number of training samples and one of the reduced dimensions $K\in\{10,20,30,50\}$.  \textsc{OursEstC} is shown as a calibrated exact-compression reference, using the working prior and cumulative direction discovery described above. These results indicate that our algorithm works substantially better than the compared baseline.

\section{Conclusion and Limitations} \label{sec:conclusion}

We characterized exact compression of repeatedly solved linear programs through the geometry of the decision-relevant subspace $\mathrm{dir}(\mathcal{X}^\star(\mathcal{C}))$: any affine slice anchored at a reachable optimizer and whose direction subspace contains $\mathrm{dir}(\mathcal{X}^\star(\mathcal{C}))$ yields an exact reformulation, and a basis of $\mathrm{dir}(\mathcal{X}^\star(\mathcal{C}))$ gives the minimal $d^\star$-variable one. Our sample-based learner grows this basis only on informative instances and comes with a fast $\widetilde{O}(d^\star/n)$ out-of-sample certificate, and empirically matches the full-LP objective with a relatively low number of samples.

Two limitations point to natural future directions. First, our experiments provide strong evidence that our approach is stronger than the state of the art on synthetic and Netlib instances, but a broader study on real-world repeated-LP settings is needed to fully characterize the practical speedups. Second, the theory is currently restricted to linear programs; extending the decision-relevant subspace framework to mixed-integer and convex quadratic programs is a promising direction.

\bibliographystyle{plainnat}
\bibliography{reference}

\appendix

\section{Deferred proofs}
\label{app:deferred-proofs}

\subsection{Delayed proofs in Subsection~\ref{subsec:exact-reduction}}
\label{app:proof-exact}

\begin{proof}[Proof of \Cref{thm:exact-reduced-lp}]
Suppose first that $V$ is exact-sketching on $\C$.  Then every reachable optimizer belongs to $V$: if $v\in\X^\star(\C)$, then for some $c\in\C$ we have $v\in\argmin_{x\in\X}c^\top x$, and exactness gives $v\in\argmin_{x\in\X\cap V}c^\top x\subseteq V$.  Hence $V\cap\X^\star(\C)\ne\emptyset$.  Moreover, for any $v,v'\in\X^\star(\C)$, both points lie in $V$, so $v-v'\in\dir(V)$.  Taking spans over all such pairs yields $\Wstar\subseteq\dir(V)$.

Conversely, assume $\Wstar\subseteq\dir(V)$ and choose $x_0\in V\cap\X^\star(\C)$.  For every $v\in\X^\star(\C)$, we have $v-x_0\in\Wstar\subseteq\dir(V)$, so $v\in V$.  Thus $V$ contains all reachable optimizers.  Fix $c\in\C$.  The optimal face $\argmin_{x\in\X}c^\top x$ is the convex hull of its extreme points, and these extreme points belong to $\X^\star(\C)$.  Therefore the whole optimal face is contained in $V$. Hence, the restricted and original LPs have the same optimizer set, and $V$ is exact-sketching on $\C$.
\end{proof}

\begin{proof}[Justification of \eqref{eq:exact-reduced-lp} and \eqref{eq:argmin-reduced-lp}]
Let $V:=x_0+\operatorname{range}(U)$.  Under the assumptions stated before \eqref{eq:argmin-reduced-lp}, \Cref{thm:exact-reduced-lp} gives
\[
    \argmin_{x\in\X\cap V}c^\top x=\X^\star(c),\qquad c\in\C.
\]
The map $T_{U,x_0}$ is an affine bijection from $\mathcal Z_{U,x_0}$ onto $\X\cap V$, and
$c^\top T_{U,x_0}(z)=c^\top x_0+(U^\top c)^\top z$.  Therefore
\[
    T_{U,x_0}\bigl(F_{U,x_0}(c)\bigr)
    =
    \argmin_{x\in\X\cap V}c^\top x
    =
    \X^\star(c),
\]
which is \eqref{eq:argmin-reduced-lp}.  The same parametrization gives
\[
    \min_{x\in\X}c^\top x
    =\min_{x\in\X\cap V}c^\top x
    =c^\top x_0+
      \min_{z\in\mathcal Z_{U,x_0}}(U^\top c)^\top z,
\]
which is \eqref{eq:exact-reduced-lp}.
\end{proof}

\begin{proof}[Proof of \Cref{cor:exact-reduced-vertices}]
Let $F_Z:=F_{U,x_0}(c)$ and $F_X:=\X^\star(c)$.  Exactness gives $T_{U,x_0}(F_Z)=F_X$.  Since $T_{U,x_0}$ is an affine bijection from $\mathcal Z_{U,x_0}$ onto $\X\cap V_{U,x_0}$, it is also an affine bijection from the face $F_Z$ onto the face $F_X$.  Hence it maps extreme points of $F_Z$ exactly to extreme points of $F_X$:
\[
    T_{U,x_0}(F_Z^\angle)=F_X^\angle .
\]
Because $F_Z$ is a face of $\mathcal Z_{U,x_0}$, $F_Z^\angle=F_Z\cap\mathcal Z_{U,x_0}^\angle$.  Because $F_X$ is a face of $\X$, $F_X^\angle=F_X\cap\X^\angle$.  Substituting these identities yields
\[
    T_{U,x_0}\!\left(F_{U,x_0}(c)\cap\mathcal Z_{U,x_0}^\angle\right)
    =
    \X^\star(c)\cap\X^\angle .
\]
\end{proof}

\begin{proof}[Verification of \Cref{ex:affine-vs-linear-projection}]
For every $c\in\C_\rho$, $c_1<0$, so every minimizer over $[-1,1]^2$ has $x_1=1$.  If $c_2>0$ the unique optimizer is $(1,-1)$, if $c_2<0$ it is $(1,1)$, and if $c_2=0$ the whole right edge is optimal.  Since $0\in(-1,1)$, the reachable optimizer set is therefore
\[
    \X^\star(\C_\rho)=\{1\}\times[-1,1],
    \qquad
    \dir(\X^\star(\C_\rho))=\spanop\{(0,1)\}.
\]
Thus the affine slice $(1,0)+\spanop\{(0,1)\}$ gives the exact one-variable reduction $x=(1,z)$, $z\in[-1,1]$.

Now fix any one-dimensional homogeneous subspace $L$.  Write $[-1,1]^2\cap L=\{\lambda z:\lambda\in[-1,1]\}$ for some $z\in[-1,1]^2$.  Let a uniform cost be $c=(-A,B)$, where $A\sim\mathrm{Unif}(1,1+\rho)$ and $B\sim\mathrm{Unif}(-1,1)$ are independent.  The full value is
\[
    \operatorname{val}(c)=-A-|B|,
\]
whereas the value restricted to $L$ is
\[
    \operatorname{val}_L(c)=\min_{\lambda\in[-1,1]}\lambda(-Az_1+Bz_2)
    =-|-Az_1+Bz_2|.
\]
Condition on $A$ and $T:=|B|$.  Averaging over the two signs of $B$ and using $|p+q|+|p-q|=2\max\{|p|,|q|\}$, we get
\[
    \mathbb E\bigl[|-Az_1+Bz_2|\mid A,T\bigr]
    =\max\{A|z_1|,T|z_2|\}
    \le A,
\]
since $T\le1\le A$ and $|z_1|,|z_2|\le1$.  Therefore
\[
    \mathbb E\bigl[\operatorname{val}_L(c)-\operatorname{val}(c)\mid A,T\bigr]
    =A+T-\mathbb E\bigl[|-Az_1+Bz_2|\mid A,T\bigr]
    \ge T.
\]
Finally, $T=|B|\sim\mathrm{Unif}(0,1)$, so $\mathbb E[T]=1/2$.  Hence, every one-dimensional homogeneous slice has an expected objective gap of at least $1/2$.
\end{proof}

\subsection{Delayed proofs in Subsection~\ref{subsec:adj-learning}}
\label{app:proof-adj}

We first provide a geometric characterization of $\mathsf{Exact}(U,x_0,c)$.

\begin{lemma}[Characterization of $\mathsf{Exact}(U,x_0,c)$]
\label{lem:exact-slice-characterization}
Fix $x_0\in\X$ and a full-column-rank matrix $U$, and set
\[
    V_{U,x_0}:=x_0+\operatorname{range}(U).
\]
For any $c\in\RR^d$,
\[
    \mathsf{Exact}(U,x_0,c)
    \quad\Longleftrightarrow\quad
    \X^\star(c)\subseteq V_{U,x_0}.
\]
Consequently, if $U^+$ is obtained from $U$ by appending columns and
$\mathsf{Exact}(U,x_0,c)$ holds, then $\mathsf{Exact}(U^+,x_0,c)$ holds.
\end{lemma}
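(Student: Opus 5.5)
The plan is to unpack the definition of $\mathsf{Exact}(U,x_0,c)$, namely $\X^\star(c)=T_{U,x_0}(F_{U,x_0}(c))$, using the affine parametrization of the slice, exactly as in the justification of \eqref{eq:exact-reduced-lp}. Since $U$ has full column rank, $T_{U,x_0}$ is an affine bijection from $\mathcal Z_{U,x_0}$ onto $\X\cap V_{U,x_0}$, and $c^\top T_{U,x_0}(z)=c^\top x_0+(U^\top c)^\top z$. Because adding the constant $c^\top x_0$ does not change minimizers, we get
\[
T_{U,x_0}\bigl(F_{U,x_0}(c)\bigr)=\argmin_{x\in\X\cap V_{U,x_0}}c^\top x .
\]
This set is nonempty: $x_0\in\X\cap V_{U,x_0}$, and $\X$ is a nonempty bounded polytope, so the restricted LP attains its minimum. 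Hence $\mathsf{Exact}(U,x_0,c)$ is equivalent to $\argmin_{\X\cap V}c^\top x=\argmin_{\X}c^\top x$, where we abbreviate $V=V_{U,x_0}$. This holds for arbitrary $c\in\RR^d$; no assumption on $\C$ is needed.

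For ($\Rightarrow$), exactness gives $\X^\star(c)=\argmin_{\X\cap V}c^\top x\subseteq V$ directly.

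For ($\Leftarrow$), assume $\X^\star(c)\subseteq V$ and let $m=\min_{\X}c^\top x$. Any $x^\ast\in\X^\star(c)$ lies in $\X\cap V$, so $\min_{\X\cap V}c^\top x\le m$. The reverse inequality holds because $\X\cap V\subseteq\X$, so the two optimal values coincide. Therefore
\[
\argmin_{\X\cap V}c^\top x=\{x\in\X\cap V: c^\top x=m\}=\X^\star(c)\cap V=\X^\star(c).
\]

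The monotonicity consequence then follows at once. Appending columns to $U$ yields $U^+$ with $\operatorname{range}(U)\subseteq\operatorname{range}(U^+)$, so $V_{U,x_0}\subseteq V_{U^+,x_0}$. Applying the characterization, first to $U$ and then to $U^+$, transfers the containment $\X^\star(c)\subseteq V$. One caveat is that the lemma assumes full column rank. Algorithm~\ref{alg:adj-cumulative} only appends $x-x_0\notin\operatorname{range}(U_i)$, so rank is preserved in that use. For a general appended $U^+$ that may lose full column rank, I would note that the bijection argument only needs the slice $V$ (not $T$ to be injective) if $\mathsf{Exact}$ is read as an equality of images. That is the only mildly delicate point; the rest is routine.
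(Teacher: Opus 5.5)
Your proof is correct and follows the paper's argument essentially step for step. You identify $T_{U,x_0}(F_{U,x_0}(c))$ with $\argmin_{x\in\X\cap V_{U,x_0}}c^\top x$ via the affine parametrization, read off ($\Rightarrow$) directly, obtain ($\Leftarrow$) by comparing the optimal values of the restricted and original LPs, and get monotonicity from $V_{U,x_0}\subseteq V_{U^+,x_0}$; your side remark that this identity needs only $T_{U,x_0}(\mathcal Z_{U,x_0})=\X\cap V_{U,x_0}$, not injectivity, is a correct refinement the paper leaves implicit, and it covers an appended $U^+$ that is not of full column rank.
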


\begin{proof}
The map $T_{U,x_0}$ is an affine bijection from $\mathcal Z_{U,x_0}$ onto
$\X\cap V_{U,x_0}$. Hence
\[
    T_{U,x_0}\bigl(F_{U,x_0}(c)\bigr)
    =
    \argmin_{x\in\X\cap V_{U,x_0}} c^\top x .
\]
If $\mathsf{Exact}(U,x_0,c)$ holds, then the right-hand side is
$\X^\star(c)$, so $\X^\star(c)\subseteq V_{U,x_0}$.

Conversely, suppose $\X^\star(c)\subseteq V_{U,x_0}$. Since
$\X\cap V_{U,x_0}\subseteq\X$, the restricted LP cannot have value smaller
than the original LP. On the other hand, the original optimal face
$\X^\star(c)$ is feasible for the restricted LP by assumption. Thus the two
optimal values are equal, and the restricted optimizer set is
\[
    \argmin_{x\in\X\cap V_{U,x_0}} c^\top x
    =
    \X^\star(c)\cap V_{U,x_0}
    =
    \X^\star(c).
\]
This is exactly $\mathsf{Exact}(U,x_0,c)$.

Finally, appending columns enlarges $V_{U,x_0}$. Therefore containment of the
optimal face is preserved, and the monotonicity statement follows from the
equivalence just proved.
\end{proof}

To proceed, we show that the containment test-and-append process in
Lines~6--7 of Algorithm~\ref{alg:adj-cumulative} can be implemented by
solving LPs.

\begin{lemma}[Containment test via LPs]
\label{lem:adj-lp-containment}
Fix $c\in\RR^d$, an anchor $x_0\in\X$, and a matrix $U$. Let
$S:=\operatorname{range}(U)$, let
\[
    v(c):=\min_{y\in\X}c^\top y,
    \qquad
    G(c):=\X^\star(c)=\{x\in\X:c^\top x=v(c)\},
\]
and let $a_1,\ldots,a_{d-r}$ be any basis of $S^\perp$, where
$r:=\operatorname{rank}(U)$. Then
\[
    G(c)\subseteq x_0+S
\]
if and only if, for every $j=1,\ldots,d-r$,
\[
    \max_{x\in G(c)} a_j^\top(x-x_0)=0
    \quad\text{and}\quad
    \min_{x\in G(c)} a_j^\top(x-x_0)=0 .
\]
Moreover, if the containment fails, then one of these auxiliary LPs returns an
extreme optimizer $x\in G(c)\cap\X^\angle$ satisfying
$x-x_0\notin S$.
\end{lemma}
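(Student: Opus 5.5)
The plan is to reduce the containment $G(c)\subseteq x_0+S$ to a family of scalar conditions by using orthogonal complements, and then to get an extreme optimizer from the fundamental theorem of LP applied over the face $G(c)$.

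\textbf{Step 1 (reduction to scalar conditions).} Since $S=(S^\perp)^\perp$ and $a_1,\ldots,a_{d-r}$ span $S^\perp$, a vector $w$ lies in $S$ if and only if $a_j^\top w=0$ for every $j$. Hence $G(c)\subseteq x_0+S$ holds if and only if, for every $x\in G(c)$ and every $j$, we have $a_j^\top(x-x_0)=0$. Equivalently, for each $j$ the linear function $x\mapsto a_j^\top(x-x_0)$ vanishes identically on $G(c)$.

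\textbf{Step 2 (the max/min equivalence).} $G(c)$ is nonempty and compact, because $\X$ is a nonempty bounded polytope and $v(c)$ is attained. So each of the auxiliary problems $\max/\min_{x\in G(c)}a_j^\top(x-x_0)$ is a bounded LP over the polyhedron $\{x:Ax\le b,\;c^\top x=v(c)\}$, and its optimum is attained.

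A linear function vanishes on $G(c)$ if and only if both its maximum and its minimum over $G(c)$ equal zero. The forward direction is immediate. For the converse, the function is squeezed between $\min=0$ and $\max=0$, so it is zero everywhere on $G(c)$. Combined with Step 1, this gives the stated equivalence.

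\textbf{Step 3 (producing the extreme optimizer).} Suppose containment fails. By Step 2, some auxiliary LP has a nonzero optimal value. Say $\max_{x\in G(c)}a_j^\top(x-x_0)>0$; the min case is symmetric. $G(c)$ is a nonempty polytope, so this LP attains its optimum at a vertex $x$ of $G(c)$, and a simplex-type solver returns such a basic optimal solution. Then $a_j^\top(x-x_0)\neq 0$, which forces $x-x_0\notin S$.

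\textbf{Step 4 (the vertex lies in $\X^\angle$).} It remains to see that a vertex of $G(c)$ is an extreme point of $\X$. This holds because $G(c)$ is a face of $\X$: it is the intersection of $\X$ with the supporting hyperplane $\{c^\top x=v(c)\}$. Extreme points of a face are extreme points of the ambient polytope. Indeed, if $x=\tfrac12(y+z)$ with $y,z\in\X$, then $c^\top y,\,c^\top z\ge v(c)$ and their average equals $v(c)$. Hence $y,z\in G(c)$, and extremality of $x$ in $G(c)$ gives $y=z=x$. Therefore $x\in G(c)\cap\X^\angle$.

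I expect the only mildly delicate point to be this last one: ensuring that the returned optimizer is actually a vertex. The argument needs attainment at a vertex, which follows from boundedness of $G(c)$ together with a vertex-returning solver, and then the face argument of Step 4 transfers extremality from $G(c)$ to $\X$.
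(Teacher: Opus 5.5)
Your proposal is correct and follows the paper's proof essentially step for step: the orthogonal-complement characterization of $x_0+S$, the max/min squeeze, taking an extreme optimizer of a violated auxiliary LP, and the fact that extreme points of the face $G(c)$ are extreme points of $\X$. The only difference is that you spell out attainment over the compact face and verify the face-extremality fact directly with the midpoint argument, whereas the paper simply invokes it.
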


\begin{proof}
For any $x\in\RR^d$, we have $x\in x_0+S$ if and only if
$x-x_0\in S$, which is equivalent to
\[
    a_j^\top(x-x_0)=0
    \qquad
    \text{for all } j=1,\ldots,d-r.
\]
Therefore $G(c)\subseteq x_0+S$ holds if and only if each of the linear
functionals $a_j^\top(x-x_0)$ is identically zero on $G(c)$. This is equivalent
to requiring both its maximum and its minimum over $G(c)$ to be zero.

If the containment fails, then for some $j$ there exists
$\bar x\in G(c)$ with $a_j^\top(\bar x-x_0)\ne0$. Hence either
\[
    \max_{x\in G(c)}a_j^\top(x-x_0)>0
    \quad\text{or}\quad
    \min_{x\in G(c)}a_j^\top(x-x_0)<0 .
\]
Choose an extreme optimal solution of the corresponding auxiliary LP over
$G(c)$. Since $G(c)$ is a face of the polytope $\X$, every extreme point of
$G(c)$ is an extreme point of $\X$. The chosen point therefore belongs to
$G(c)\cap\X^\angle$ and has nonzero projection along some $a_j\in S^\perp$,
so $x-x_0\notin S$.
\end{proof}

The next three lemmas formalize the key structural properties of our algorithm.
Together, they show that Algorithm~\ref{alg:adj-cumulative} induces a
\emph{stable, realizable} sample compression scheme
\citep[Definitions~7--8]{hanneke2021stablecompression} with compression size
at most $\dstar$. This is the key mechanism we use to obtain a
distribution-free fast-rate certificate \eqref{eq:fast-rate}.

\begin{lemma}[Compression size and termination]
\label{lem:adj-compression-size}
Fix an anchor $x_0\in\X^\star(\C)$ and run \Cref{alg:adj-cumulative} with
deterministic tie-breaking. Then the algorithm makes at most $\dstar$ total
appends. Moreover, for its output $(U_n,T_n)$,
\[
    \operatorname{range}(U_n)\subseteq\Wstar,
    \qquad
    |T_n|\le \operatorname{rank}(U_n)\le\dstar .
\]
\end{lemma}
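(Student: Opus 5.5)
The plan is an induction over the appends made by \Cref{alg:adj-cumulative}, maintaining two invariants. First, every column of the current matrix $U$ lies in $\Wstar$. Second, the columns of $U$ are linearly independent.

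For the first invariant, look at any appended column. It has the form $x-x_0$ with $x\in\X^\star(c_i)\cap\X^\angle$ for a training cost $c_i$. Because the samples are drawn from a distribution supported on $\C$, we have $c_i\in\C$, so $x\in\X^\star(\C)$. The anchor $x_0$ also lies in $\X^\star(\C)$. Hence $x-x_0$ is a difference of two reachable optimizers and belongs to $\Wstar$ by definition. This gives $\operatorname{range}(U_n)\subseteq\Wstar$ once the algorithm halts.

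For the second invariant, note that the selection rule only picks an $x$ with $x-x_0\notin\operatorname{range}(U_i)$. So each append raises $\operatorname{rank}(U)$ by exactly one. Such an $x$ always exists when the while-condition holds. The reason is that $\X^\star(c_i)$ is a nonempty face of a polytope, hence the convex hull of its vertices. If every vertex lay in the affine slice $x_0+\operatorname{range}(U_i)$, the whole face would lie in it, contradicting the while-condition. Alternatively, \Cref{lem:adj-lp-containment} supplies such a vertex directly. As a result, after $k$ appends the rank equals $k$ and $U$ has full column rank.

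Combining the two invariants, the rank after $k$ appends is at most $\dim\Wstar=\dstar$, so the total number of appends is at most $\dstar$. This gives termination:
\begin{itemize}
\item Each while loop exits after finitely many iterations.
\item Each for-iteration is finite, and there are $n$ of them.
\item Each LP solve and containment test terminates, by \Cref{lem:adj-lp-containment}.
\end{itemize}
Finally, an index $i$ enters $T_n$ only if at least one append occurs during iteration $i$, and distinct indices correspond to disjoint sets of appends. Therefore
\[
|T_n|\le\#\text{appends}=\operatorname{rank}(U_n)\le\dstar.
\]

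The one delicate point is the membership $c_i\in\C$. It requires the samples to lie in $\C$, and this holds only almost surely under the support assumption. The statement should therefore be read as holding on a probability-one event, or pointwise whenever all samples lie in $\C$. The rest of the argument is bookkeeping.
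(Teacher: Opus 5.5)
Your proof is correct and follows the paper's argument: each appended column is a difference of two reachable optimizers and so lies in $\Wstar$, and each append raises the rank by exactly one, which bounds the number of appends by $\dstar$ and gives $|T_n|\le\operatorname{rank}(U_n)\le\dstar$. You add two small points the paper's proof leaves implicit: an explicit argument that an outside vertex exists whenever the while-condition holds, and the remark that $c_i\in\C$ holds only almost surely.
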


\begin{proof}
Every appended column has the form $x-x_0$, where
$x\in\X^\star(c_i)\cap\X^\angle$ for some training sample $c_i\in\C$. Since
$x\in\X^\star(\C)$ and $x_0\in\X^\star(\C)$, we have
\[
    x-x_0\in \Wstar.
\]
Thus every appended direction lies in $\Wstar$.

The update rule appends such a vector only when
$x-x_0\notin\operatorname{range}(U_i)$. Hence each append increases the rank
of the current matrix by exactly one. Starting from the empty matrix, after
$k$ appends the current rank is $k$, and its range is a $k$-dimensional
subspace of $\Wstar$. Since $\dim(\Wstar)=\dstar$, there can be at most
$\dstar$ appends.

This also proves termination: if the while-loop were still active after the
current range had dimension $\dstar$, then the algorithm would find
$x\in\X^\star(c_i)\cap\X^\angle$ with
$x-x_0\notin\operatorname{range}(U_i)$. But all such differences belong to
$\Wstar$, and the current range is already a $\dstar$-dimensional subspace of
$\Wstar$, hence equals $\Wstar$, a contradiction.

Finally, every hard sample triggers at least one append, while each append
increases the rank by one. Therefore
\[
    |T_n|\le \operatorname{rank}(U_n)\le \dstar,
\]
and the range inclusion follows from the first paragraph.
\end{proof}

\begin{lemma}[Realizability]
\label{lem:adj-realizability}
Under the assumptions of \Cref{lem:adj-compression-size}, the output $U_n$ of
\Cref{alg:adj-cumulative} satisfies
\[
    \mathsf{Exact}(U_n,x_0,c_i)
    \qquad
    \text{for every } i=1,\ldots,n.
\]
Equivalently, the empirical failure loss
$1\{\neg\mathsf{Exact}(U_n,x_0,c_i)\}$ is zero on the training sample.
\end{lemma}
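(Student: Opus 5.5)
The plan is to combine three facts: the characterization of exactness as containment (\Cref{lem:exact-slice-characterization}), the monotonicity of the matrices $U_i$, and the termination argument of \Cref{lem:adj-compression-size}. Fix a training index $i\in\{1,\dots,n\}$.

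First, I check that the inner while-loop for sample $i$ terminates. By \Cref{lem:adj-compression-size}, the total number of appends is at most $\dstar$, so the loop exits after finitely many iterations. The only exit condition is the loop guard failing, so at exit we have $\X^\star(c_i)\subseteq x_0+\operatorname{range}(U_i)$. Each append inside the loop is well defined: whenever containment fails, \Cref{lem:adj-lp-containment} supplies an extreme optimizer $x\in\X^\star(c_i)\cap\X^\angle$ with $x-x_0\notin\operatorname{range}(U_i)$.

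Second, I apply \Cref{lem:exact-slice-characterization} to conclude that $\mathsf{Exact}(U_i,x_0,c_i)$ holds. Its hypotheses require $x_0\in\X$, which holds because $x_0\in\X^\star(\C)\subseteq\X$. They also require $U_i$ to have full column rank, which holds because every append raises the rank by exactly one, as shown in \Cref{lem:adj-compression-size}.

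Third, I pass from $U_i$ to $U_n$. For $j>i$, the algorithm sets $U_j\gets U_{j-1}$ and then only appends columns, so $U_n$ is obtained from $U_i$ by appending columns. The monotonicity clause of \Cref{lem:exact-slice-characterization} then gives $\mathsf{Exact}(U_n,x_0,c_i)$. Since $i$ was arbitrary, this holds for every training sample, which is the same as saying the empirical failure loss is zero.

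There is no real obstacle in this argument. The only points that need care are confirming that termination is inherited from the rank bound, and that full column rank is maintained so that the characterization lemma applies at every stage.
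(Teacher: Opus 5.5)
Your proposal is correct and follows essentially the same route as the paper: the exit of the while-loop certifies $\X^\star(c_i)\subseteq x_0+\operatorname{range}(U_i)$, \Cref{lem:exact-slice-characterization} converts this into $\mathsf{Exact}(U_i,x_0,c_i)$, and because later iterations only append columns, the monotonicity clause gives $\mathsf{Exact}(U_n,x_0,c_i)$. Your additional checks (termination via the append bound, full column rank of each $U_i$, and $x_0\in\X$) spell out hypotheses that the paper's proof uses without stating them.
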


\begin{proof}
Fix a training index $i$. When the while-loop for $c_i$ terminates, it has
certified
\[
    \X^\star(c_i)\subseteq x_0+\operatorname{range}(U_i).
\]
By \Cref{lem:exact-slice-characterization}, this is equivalent to
$\mathsf{Exact}(U_i,x_0,c_i)$.

For later samples $t>i$, \Cref{alg:adj-cumulative} only appends columns to
the matrix. Therefore
\[
    \operatorname{range}(U_i)\subseteq\operatorname{range}(U_t)
    \qquad
    \text{for all } t\ge i.
\]
The monotonicity statement in \Cref{lem:exact-slice-characterization} then
implies $\mathsf{Exact}(U_n,x_0,c_i)$.
\end{proof}

\begin{lemma}[Stability]
\label{lem:adj-stability}
Let $S=(c_1,\ldots,c_n)$ be the training sequence and let
$(U_n,T_n)$ be the output of \Cref{alg:adj-cumulative}. Write
\[
    T_n=\{i_1<\cdots<i_k\},
    \qquad
    S_T:=(c_{i_1},\ldots,c_{i_k})
\]
for the ordered hard subsequence. If \Cref{alg:adj-cumulative} is rerun on
$S_T$ using the same fixed anchor $x_0$ and the same deterministic
containment-and-update rule, then it returns the same matrix $U_n$. More
generally, deleting any subset of the non-hard samples from $S$ leaves the
reconstructed output unchanged.
\end{lemma}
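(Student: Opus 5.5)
The plan is to prove the more general statement directly, by induction on the positions of the original sequence $S$. Fix a set $D\subseteq\{1,\ldots,n\}\setminus T_n$ of non-hard indices to delete, and let $S'$ be the subsequence of $S$ with indices in $D$ removed. The case $D=\{1,\ldots,n\}\setminus T_n$ gives $S'=S_T$, so this also covers the first claim. Let $U_i$ denote the matrix held by the original run after processing $c_1,\ldots,c_i$. Let $U'_i$ denote the matrix held by the rerun after processing those retained samples among $c_1,\ldots,c_i$. The invariant to establish is $U'_i=U_i$ for every $i=0,1,\ldots,n$, as ordered matrices with identical columns, not merely identical ranges. Taking $i=n$ then gives the claim. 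The base case $U'_0=U_0=[\,]$ is immediate.

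For the inductive step, assume $U'_{i-1}=U_{i-1}$ and split into two cases. In the first case $i\in D$, so $i$ is non-hard in the original run. The while-loop for $c_i$ in the original run therefore performed no append, meaning $\X^\star(c_i)\subseteq x_0+\operatorname{range}(U_{i-1})$ and $U_i=U_{i-1}$. The rerun skips $c_i$, so $U'_i=U'_{i-1}=U_{i-1}=U_i$. In the second case $i\notin D$, so $c_i$ is processed by both runs. They start the while-loop for $c_i$ from the same matrix $U_{i-1}$, with the same anchor $x_0$ and the same cost $c_i$. The containment test $\X^\star(c_i)\nsubseteq x_0+\operatorname{range}(U)$ and the vertex selection in Line 5 depend only on $(c_i,x_0,U)$. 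Under the deterministic tie-breaking rule, which I would instantiate concretely via the auxiliary LPs of \Cref{lem:adj-lp-containment} with a fixed solver and a fixed basis choice of $\operatorname{range}(U)^\perp$, each iteration of the inner loop produces the same test outcome and the same appended vector $x-x_0$. An inner induction on the number of while-loop iterations, which is finite by \Cref{lem:adj-compression-size}, then shows that both runs exit the loop with identical matrices, so $U'_i=U_i$.

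Non-hard samples that are kept, as opposed to deleted, fall under the second case and cause no append in either run. Consequently the hard index set of the rerun, read in original indices, is again $T_n$. This observation is useful when \Cref{thm:adj-cumulative-learning} invokes stability.

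I expect the main obstacle to be the precise meaning of deterministic tie-breaking. The argument needs the choice in Line 5 to be a function only of the current state $(c_i,x_0,U)$. It must not depend on the sample index, on the history of earlier samples, or on solver warm-starts carried over from previous calls. I would state this explicitly as the interpretation of the algorithm's requirement, and then note that with this interpretation the rest of the argument is pure bookkeeping. For the same reason I would insist that the matrices are equal as ordered column lists. Equality of ranges alone would not suffice to make the later deterministic choices coincide, because those choices, for example through the basis of $\operatorname{range}(U)^\perp$, may depend on the actual columns.
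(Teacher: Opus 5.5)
Your proof is correct and follows essentially the same route as the paper. Both argue by induction that each retained sample enters the rerun with the same matrix as in the original run, so the deterministic containment-and-append loop produces identical appends, while deleted non-hard samples changed nothing in the first place. The only differences are presentational: you index the induction by all original positions with an arbitrary deletion set $D$, which covers $S_T$ as a special case, whereas the paper inducts over the hard samples first and then remarks that the same argument handles retained non-hard samples; your explicit requirement that tie-breaking depend only on the current state $(c_i,x_0,U)$ makes precise what the paper assumes implicitly.
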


\begin{proof}
Let $U^{(j)}$ denote the matrix in the original run immediately after processing
the hard sample $c_{i_j}$, and set $U^{(0)}$ to be the empty matrix. We prove
by induction on $j$ that rerunning the algorithm on the first $j$ hard samples
$(c_{i_1},\ldots,c_{i_j})$ produces exactly $U^{(j)}$.

For $j=0$ this is immediate. Before the first hard index $i_1$, every sample
in the original run is non-hard and hence leaves the empty matrix unchanged.
Thus $c_{i_1}$ enters the original run with matrix $U^{(0)}$. The reconstructed
run also feeds $c_{i_1}$ into the same matrix $U^{(0)}$. Since the
containment tests, outside-vertex choices, and all tie-breaking rules are fixed
deterministically, the full while-loop for $c_{i_1}$ appends the same sequence
of directions in both runs. Hence the reconstructed matrix after $c_{i_1}$ is
$U^{(1)}$.

Now assume the claim holds through hard sample $c_{i_j}$. In the original run,
all samples between $i_j$ and $i_{j+1}$ are non-hard, so they leave the matrix
$U^{(j)}$ unchanged. Hence $c_{i_{j+1}}$ enters the original run with matrix
$U^{(j)}$. By the induction hypothesis, the reconstructed run also presents
$c_{i_{j+1}}$ to the same matrix $U^{(j)}$. Determinism again forces the same
sequence of appends, so the reconstructed matrix after $c_{i_{j+1}}$ is
$U^{(j+1)}$.

Taking $j=k$ shows that rerunning the algorithm only on the ordered hard
subsequence $S_T$ reproduces $U_n$. If some non-hard samples are retained while
others are deleted, the same induction applies: retained non-hard samples enter
with the same matrix as in the original run and perform no append. Thus deleting
any subset of non-hard samples does not change the final reconstructed matrix.
\end{proof}

We now provide the full proof of \Cref{thm:adj-cumulative-learning}.

\begin{proof}[Proof of \Cref{thm:adj-cumulative-learning}]
The structural claims
\[
    \operatorname{range}(U_n)\subseteq\Wstar,
    \qquad
    |T_n|\le \operatorname{rank}(U_n)\le\dstar
\]
are exactly \Cref{lem:adj-compression-size}. The zero-training-failure claim
is \Cref{lem:adj-realizability}. It remains to prove the out-of-sample
certificate \eqref{eq:fast-rate}.

\paragraph{Sample-compression background.}
Sample compression is a classical way to prove distribution-free generalization:
rather than controlling the complexity of an entire hypothesis class, one shows
that the learned predictor can be reconstructed from a small subset of the
training sample.  This idea goes back to the PAC-learning and compression
literature \citep{valiant1984learnable,littlestone1986relating,
floyd1995samplecompression}; see also
\citet{graepel2005pacbayes,moran2016samplecompression} for later developments.
The particular form needed here is the \emph{stable, realizable} version of
sample compression.  Stable compression schemes and their fast-rate consequences
are developed in \citet{bousquet2020properlearning} and
\citet{hanneke2021stablecompression}; related compression-based certificates in
optimization and scenario analysis are discussed by
\citet{campigaratti2023compression}.

We recall the specific form used below.  A sample compression scheme for binary
prediction consists of a compression map $\kappa$, which selects an ordered
subsequence of the training sample, and a reconstruction map $\rho$, which maps
the compressed subsequence to a prediction rule.  The scheme is \emph{stable} if
deleting any uncompressed training examples does not change the reconstructed
rule.  In the realizable stable case, where the reconstructed rule has zero
empirical error, \citet[Definitions~7--8 and Corollary~11]{hanneke2021stablecompression}
imply that, with probability at least $1-\delta$ over an i.i.d. sample $S$ of
size $n$,
\[
    R(\rho(\kappa(S)))
    \le
    \frac{4}{n}\left(6|\kappa(S)|+\log\frac{e}{\delta}\right),
\]
where $R(\cdot)$ is the true $0$--$1$ risk and $|\kappa(S)|$ is the realized
compression size.  This fast $1/n$ rate is specific to the realizable stable
regime; in agnostic compression settings, such fast rates are not available in
general \citep{hanneke19b}.

\paragraph{The induced binary learning problem.}
For a matrix $U$, define the binary failure rule
\[
    h_U(c):=1\{\neg\mathsf{Exact}(U,x_0,c)\}.
\]
Equivalently, we may view the data as a supervised binary problem with examples
$(c,Y)$, where $c\sim P_c$ and $Y=0$ almost surely. Under this convention,
the risk of $h_U$ is exactly the exact-compression failure probability:
\[
    R(h_U)
    =
    \mathbb P_{c\sim P_c}\!\left[h_U(c)=1\right]
    =
    \mathbb P_{c\sim P_c}\!\left[\neg\mathsf{Exact}(U,x_0,c)\right].
\]

Let $S=(c_1,\ldots,c_n)$ and let $(U_n,T_n)$ be the output of
\Cref{alg:adj-cumulative}. Define the compression map $\kappa$ by
\[
    \kappa(S):=(c_i)_{i\in T_n},
\]
where the subsequence is stored in its original order. Since all labels are
identically zero, we suppress them in the notation; equivalently, $\kappa$
stores the labeled examples $(c_i,0)_{i\in T_n}$. Define the reconstruction map
$\rho$ by rerunning \Cref{alg:adj-cumulative} on any supplied subsequence
$S'$ using the same fixed anchor $x_0$ and the same deterministic rule, and
returning the classifier $h_{U(S')}$ induced by the resulting matrix $U(S')$.

By \Cref{lem:adj-stability}, rerunning the algorithm on the compressed hard
subsequence $\kappa(S)$ returns the same matrix $U_n$. Therefore
\[
    \rho(\kappa(S))=h_{U_n}.
\]
The same lemma also gives the stability property required by
\citet{hanneke2021stablecompression}: deleting any subset of the uncompressed
samples does not change the reconstructed classifier. By
\Cref{lem:adj-realizability},
\[
    h_{U_n}(c_i)=0
    \qquad
    \text{for every } i=1,\ldots,n,
\]
so the reconstructed classifier has zero empirical error.

Applying \citet[Corollary~11]{hanneke2021stablecompression} to this stable
realizable compression scheme yields, with probability at least $1-\delta$,
\[
\begin{aligned}
    \mathbb P_{c\sim P_c}\!\left[\neg\mathsf{Exact}(U_n,x_0,c)\right]
    &=
    R(h_{U_n}) \\
    &=
    R(\rho(\kappa(S))) \\
    &\le
    \frac{4}{n}\left(6|\kappa(S)|+\log\frac{e}{\delta}\right) \\
    &=
    \frac{4}{n}\left(6|T_n|+\log\frac{e}{\delta}\right).
\end{aligned}
\]
Taking complements gives
\[
    \mathbb P_{c\sim P_c}\!\left[\mathsf{Exact}(U_n,x_0,c)\right]
    \ge
    1-\frac{4}{n}\left(6|T_n|+\log\frac{e}{\delta}\right).
\]
The final inequality in the theorem follows from
$|T_n|\le\dstar$ in \Cref{lem:adj-compression-size}.

\paragraph{Anchor generated from data.}
The theorem assumes that $x_0$ is fixed independently of the $n$ training
costs. If instead $x_0$ is generated from one of the training samples, then the
same compression argument remains valid after adding the anchor-generating
sample to the compressed subsequence and letting the reconstruction map first
recompute $x_0$ by the same deterministic rule. The bound is then unchanged
except that $|T_n|$ is replaced by $|T_n|+1$. Equivalently, one may draw an
independent anchor sample first and apply the theorem to the remaining
representation-learning samples.

\paragraph{LP implementation and tractability.}
It remains to justify the claimed LP-based implementation. By
\Cref{lem:adj-lp-containment}, each containment test
\[
    \X^\star(c_i)\subseteq x_0+\operatorname{range}(U_i)
\]
can be implemented by first solving the LP defining $v(c_i)$ and then solving,
for a basis $a_1,\ldots,a_{d-r}$ of $\operatorname{range}(U_i)^\perp$, the
auxiliary LPs
\[
    \max_{x\in\X:\ c_i^\top x=v(c_i)} a_j^\top(x-x_0),
    \qquad
    \min_{x\in\X:\ c_i^\top x=v(c_i)} a_j^\top(x-x_0).
\]
If all these optima are zero, containment is certified. If not, an extreme
optimal solution of a violated auxiliary LP is a valid vertex
$x\in\X^\star(c_i)\cap\X^\angle$ with
$x-x_0\notin\operatorname{range}(U_i)$, so it is a valid update direction in
\Cref{alg:adj-cumulative}.

There is one final containment check for each of the $n$ samples and one
additional check for each append. Since \Cref{lem:adj-compression-size} gives at
most $\dstar$ total appends, the number of containment checks is at most
$n+\dstar$. Each check solves only a polynomial number of LPs over $\X$ or over
the optimal face of $\X$ obtained by adding the equality $c_i^\top x=v(c_i)$,
and the required orthogonal bases and rank tests are computable by standard
linear algebra. Thus, when $\X$ is given in the H-representation of
\eqref{eq:lp}, the cumulative learner is polynomial time in the usual LP oracle
model.
\end{proof}

\subsection{Delayed proofs in Subsection~\ref{subsec:prior-estimation}}

\subsubsection{Construction of calibrated estimated priors}
\label{app:estimated-prior-construction}
The following construction is distribution-free.  It is the one-sided tolerance-region argument of \citet{wilks1941determination}, written in the language of arbitrary data-fitted scores and closely related to split conformal calibration \citep{vovk2005algorithmic,lei2018distribution}: Split an independent pilot sample into a fitting sample $\mathcal D_{\rm fit}$ and a calibration sample $\mathcal D_{\rm cal}=\{c^{\rm cal}_1,\ldots,c^{\rm cal}_m\}$.  Fit a measurable score $s_{\widehat\theta}:\RR^d\to\RR$ using only $\mathcal D_{\rm fit}$, and assume every sublevel set $\{c:s_{\widehat\theta}(c)\le t\}$ is convex.  Let $S_j=s_{\widehat\theta}(c^{\rm cal}_j)$, write $S_{(1)}\le\cdots\le S_{(m)}$ for the order statistics, and set $S_{(m+1)}:=+\infty$.

Fix $\rho,\delta_0\in(0,1)$.  Define
\[
    k_{\rho,\delta_0}:=
    \min\left\{k\in\{1,\ldots,m+1\}:
        \mathbb P[\operatorname{Bin}(m,1-\rho)\ge k]\le\delta_0
    \right\},
    \qquad
    \widehat\C_{\rho,\delta_0}:=\{c:s_{\widehat\theta}(c)\le S_{(k_{\rho,\delta_0})}\}.
\]
Here $\operatorname{Bin}(m,1-\rho)$ is the binomial count of calibration scores falling below a population $(1-\rho)$-quantile of the fitted score distribution.  Thus $k_{\rho,\delta_0}$ is the upper $\delta_0$ tail cutoff, roughly $m(1-\rho)+O\!\left(\sqrt{m\rho(1-\rho)\log(1/\delta_0)}+\log(1/\delta_0)\right)$.\footnote{A formal derivation of this order estimate is given in \Cref{lem:binomial-cutoff-size}.}  If the calibration sample is too small for the requested confidence, we set $k_{\rho,\delta_0}=m+1$ and the calibrated set is all of $\RR^d$.

\begin{proposition}[Tolerance-calibrated convex estimated prior]
\label{prop:tolerance-estimated-prior}
Under the preceding calibration setup, $\widehat\C_{\rho,\delta_0}$ is convex and is a $(\rho,\delta_0)$-estimated prior.
\end{proposition}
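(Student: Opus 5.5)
Convexity is immediate from the setup: $\widehat\C_{\rho,\delta_0}$ is a sublevel set $\{c:s_{\widehat\theta}(c)\le t\}$ with the data-dependent threshold $t=S_{(k_{\rho,\delta_0})}$. Every such sublevel set is convex by assumption, and when $k_{\rho,\delta_0}=m+1$ the set is $\RR^d$, which is also convex. Pilot-measurability follows because $s_{\widehat\theta}$ is measurable in the fitting data and order statistics are measurable. The substance is the coverage statement $\mathbb P_{\rm pilot}[P_c(c\notin\widehat\C)\le\rho]\ge1-\delta_0$. I will prove it conditionally on $\mathcal D_{\rm fit}$ and then average.

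\textbf{Step 1: reduce to the calibration scores.} Condition on $\mathcal D_{\rm fit}$, so the score $s:=s_{\widehat\theta}$ is a fixed function. Let $F$ be the distribution function of $s(c)$ with $c\sim P_c$. The calibration scores $S_1,\dots,S_m$ are i.i.d.\ with law $F$, because the calibration sample is independent of the fitting sample. Define the population quantile
\[
q:=\inf\{t:F(t)\ge1-\rho\}.
\]
Right-continuity gives $F(q)\ge1-\rho$, and for every $t<q$ we have $F(t)<1-\rho$, i.e.\ $P_c(s(c)>t)>\rho$. Hence the failure event $\{P_c(s(c)>S_{(k)})>\rho\}$ is exactly $\{S_{(k)}<q\}$. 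When $k=m+1$ this event is empty, so that case is trivial.

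\textbf{Step 2: binomial domination.} The event $\{S_{(k)}<q\}$ says that at least $k$ of the scores are $<q$. The count $N:=\#\{j:S_j<q\}$ is distributed as $\operatorname{Bin}(m,p)$ with $p=F(q^-)=\lim_{t\uparrow q}F(t)\le1-\rho$. By stochastic monotonicity of the binomial in its success probability,
\[
\mathbb P[N\ge k]\le\mathbb P[\operatorname{Bin}(m,1-\rho)\ge k]\le\delta_0
\]
for $k=k_{\rho,\delta_0}$, by the definition of $k_{\rho,\delta_0}$. This holds for every realization of $\mathcal D_{\rm fit}$, so taking expectations over the fitting sample gives the unconditional bound $1-\delta_0$.

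\textbf{Main obstacle.} The delicate step is the handling of ties and atoms in $F$. I will use the left limit $F(q^-)$ rather than $F(q)$, and the strict inequality $S_j<q$, so that the identification "failure iff at least $k$ scores lie strictly below $q$" is exact even for non-continuous scores. No continuity assumption is needed, because only the inequality $p\le1-\rho$ enters the argument.
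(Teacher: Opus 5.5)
Your proof is correct and follows the paper's argument essentially step for step. As in the paper, you condition on $\mathcal D_{\rm fit}$, reduce the failure event to $\{S_{(k)}<q\}$, and bound the number of calibration scores strictly below $q$ by a $\operatorname{Bin}(m,F(q^-))$ variable that is stochastically dominated by $\operatorname{Bin}(m,1-\rho)$; convexity likewise comes from the sublevel-set structure. The only differences are cosmetic: you prove the failure event is exactly $\{S_{(k)}<q\}$ where the paper needs only one direction, and you briefly address pilot-measurability, which the paper leaves implicit.
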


A simple score with convex sublevel sets is the ridge Mahalanobis score $s_{\widehat\theta}(c):=(c-\widehat\mu)^\top(\widehat\Sigma+\lambda I)^{-1}(c-\widehat\mu)$, where $\lambda>0$ and $\widehat\mu,\widehat\Sigma$ are fitted on $\mathcal D_{\rm fit}$.  This is the squared Mahalanobis distance \citep{mahalanobis1936generalized} computed with a ridge covariance estimate; the ridge term makes the inverse well-defined and stabilizes high-dimensional covariance estimation \citep{ledoit2004wellconditioned}.  Its calibrated estimated prior is an ellipsoid.

\begin{proof}[Proof of \Cref{prop:tolerance-estimated-prior}]
Condition on the fitting sample $\mathcal D_{\rm fit}$, so the score $s_{\widehat\theta}$ is fixed.  Let $S=s_{\widehat\theta}(c)$ for an independent fresh draw $c\sim P_c$, and write $F(t):=\mathbb P[S\le t]$.  Let
\[
    q_{1-\rho}:=\inf\{t\in\RR:F(t)\ge 1-\rho\}
\]
be a $(1-\rho)$-quantile of the score distribution.  By right-continuity of $F$, $F(q_{1-\rho})\ge 1-\rho$, and $F(q_{1-\rho}^-):=\mathbb P[S<q_{1-\rho}]\le 1-\rho$.

If $F(S_{(k)})<1-\rho$, then $S_{(k)}<q_{1-\rho}$.  Hence at least $k$ of the calibration scores are strictly smaller than $q_{1-\rho}$.  The number of such scores has distribution $\operatorname{Bin}(m,F(q_{1-\rho}^-))$ and is stochastically dominated by $\operatorname{Bin}(m,1-\rho)$.  Therefore, for $k=k_{\rho,\delta_0}$,
\[
\begin{aligned}
    \mathbb P_{\rm cal}\bigl[F(S_{(k)})<1-\rho\mid \mathcal D_{\rm fit}\bigr]
    &\le
    \mathbb P\bigl[\operatorname{Bin}(m,F(q_{1-\rho}^-))\ge k\bigr] \\
    &\le
    \mathbb P\bigl[\operatorname{Bin}(m,1-\rho)\ge k\bigr]
    \le \delta_0 .
\end{aligned}
\]
Equivalently, with conditional probability at least $1-\delta_0$, $P_c(c\in\widehat\C_{\rho,\delta_0})=F(S_{(k)})\ge 1-\rho$.  The same conclusion holds after removing the conditioning on $\mathcal D_{\rm fit}$.  Convexity follows because $\widehat\C_{\rho,\delta_0}$ is a sublevel set of $s_{\widehat\theta}$; if $k=m+1$, then $S_{(m+1)}=+\infty$ and the set is all of $\RR^d$.
\end{proof}

\begin{lemma}[Size of the binomial tolerance cutoff]
\label{lem:binomial-cutoff-size}
Let $X\sim\operatorname{Bin}(m,1-\rho)$ and $L:=\log(1/\delta_0)$.  Then
\[
    k_{\rho,\delta_0}
    \le
    \min\left\{m+1,\left\lfloor m(1-\rho)+\sqrt{2m\rho(1-\rho)L}+\frac{2L}{3}\right\rfloor+1\right\}.
\]
Consequently, $k_{\rho,\delta_0}=m(1-\rho)+O\!\left(\sqrt{m\rho(1-\rho)\log(1/\delta_0)}+\log(1/\delta_0)\right)$ whenever the cutoff is not forced to be $m+1$.
\end{lemma}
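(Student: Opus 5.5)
The bound follows from Bernstein's inequality for the upper tail of a binomial, combined with the monotonicity built into the definition of $k_{\rho,\delta_0}$. Write $X=\sum_{j=1}^m B_j$ with $B_j$ i.i.d.\ Bernoulli$(1-\rho)$. The centered summands $B_j-(1-\rho)$ are bounded above by $\rho\le 1$ and have variance $\rho(1-\rho)$. The one-sided Bernstein inequality therefore gives, for every $t\ge0$,
\[
    \mathbb P\bigl[X-m(1-\rho)\ge t\bigr]\le\exp\!\left(-\frac{t^2}{2m\rho(1-\rho)+2t/3}\right).
\]
Set $t^\star:=\sqrt{2m\rho(1-\rho)L}+2L/3$. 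We check that the exponent is at least $L$, which amounts to showing $t^{\star2}\ge 2m\rho(1-\rho)L+2t^\star L/3$. Writing $a:=\sqrt{2m\rho(1-\rho)L}$ and $b:=2L/3$, this is $(a+b)^2\ge a^2+b(a+b)$, i.e.\ $ab\ge0$, which holds trivially. Hence $\mathbb P[X\ge m(1-\rho)+t^\star]\le\delta_0$.

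Next, let $k_0:=\lfloor m(1-\rho)+t^\star\rfloor+1$. Since $X$ is integer-valued and $k_0>m(1-\rho)+t^\star$, we have $\{X\ge k_0\}\subseteq\{X\ge m(1-\rho)+t^\star\}$, and so $\mathbb P[X\ge k_0]\le\delta_0$. There are two cases.

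\begin{itemize}
\item If $k_0\le m+1$, then $k_0$ belongs to the feasible set in the definition of $k_{\rho,\delta_0}$. Because that definition takes a minimum, $k_{\rho,\delta_0}\le k_0$.
\item If $k_0>m+1$, the bound $k_{\rho,\delta_0}\le m+1$ holds by construction, since $m+1$ is always admissible ($\mathbb P[X\ge m+1]=0$).
\end{itemize}

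Combining the two cases gives the stated minimum. The asymptotic statement is then immediate. The floor and the $+1$ are absorbed into the $O(\cdot)$ whenever $L\gtrsim1$, and otherwise into a constant. For the lower direction, a matching lower bound on $k_{\rho,\delta_0}$ is not needed for the "roughly" claim; only the upper estimate is asserted.

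The only delicate step is using the correct one-sided Bernstein form, with range constant $1$ and variance $\rho(1-\rho)$, and verifying the algebra for $t^\star$. Both are routine, so I do not expect a real obstacle.
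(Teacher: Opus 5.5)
Your proof is correct and follows the paper's argument: you apply the same Bernstein--Chernoff tail bound at $t=\sqrt{2m\rho(1-\rho)L}+2L/3$, then observe that any integer exceeding $m(1-\rho)+t$ is feasible in the definition of $k_{\rho,\delta_0}$, and truncate at $m+1$. You also supply details the paper leaves implicit: the check that the exponent is at least $L$ (which reduces to $ab\ge 0$), the case split showing $m+1$ is always admissible, and the caveat that the floor and $+1$ are absorbed into the $O(\cdot)$ term only when $L$ is bounded below.
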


\begin{proof}
The Bernstein--Chernoff bound for a binomial random variable gives, for every $t\ge0$,
\[
    \mathbb P[X-m(1-\rho)\ge t]
    \le
    \exp\!\left(-\frac{t^2}{2(m\rho(1-\rho)+t/3)}\right).
\]
With $t=\sqrt{2m\rho(1-\rho)L}+2L/3$, the exponent is at least $L$, hence $\mathbb P[X\ge m(1-\rho)+t]\le e^{-L}=\delta_0$. Therefore any integer strictly larger than $m(1-\rho)+t$ is feasible in the definition of $k_{\rho,\delta_0}$; the displayed bound follows after truncating at $m+1$.
\end{proof}

\begin{proof}[Proof of \Cref{thm:calibrated-prior-learning}]
By the $(\rho,\delta_0)$-estimated-prior property, $\mathbb P_{\rm pilot}(\mathcal E_0)\ge1-\delta_0$.  Condition on a pilot realization in $\mathcal E_0$.  Then $P_c(c\in\widehat\C)\ge1-\rho>0$, so $P_{\widehat\C}=P_c(\cdot\mid c\in\widehat\C)$ is well-defined and supported on the convex prior $\widehat\C$.  The chosen anchor $\widehat x_0$ belongs to $\X^\star(\widehat\C)$ by construction.

Applying \Cref{thm:adj-cumulative-learning} to the realized prior $\widehat\C$, the anchor $\widehat x_0$, and the distribution $P_{\widehat\C}$ gives, with probability at least $1-\delta_1$ over the retained representation-learning sample,
\[
    \mathbb P_{c\sim P_{\widehat\C}}\!\left[
        \neg\mathsf{Exact}(U_{n_1},\widehat x_0,c)
    \right]
    \le
    \frac{4}{n_1}\bigl(6\widehat d_\star+\log(e/\delta_1)\bigr).
\]
On this event,
\[
\begin{aligned}
    \mathbb P_{c\sim P_c}\!\left[\neg\mathsf{Exact}(U_{n_1},\widehat x_0,c)\right]
    &\le P_c(c\notin\widehat\C)
    +P_c(c\in\widehat\C)\,
      \mathbb P_{c\sim P_{\widehat\C}}\!\left[
        \neg\mathsf{Exact}(U_{n_1},\widehat x_0,c)
      \right] \\
    &\le \rho+
    \frac{4}{n_1}\bigl(6\widehat d_\star+\log(e/\delta_1)\bigr).
\end{aligned}
\]
Taking complements gives \eqref{eq:calibrated-prior-risk}.  A union bound over $\mathcal E_0$ and the retained-sample event gives joint probability at least $1-\delta_0-\delta_1$.  The final statement follows by applying \Cref{prop:tolerance-estimated-prior}, which supplies a $(\rho,\delta_0)$-estimated prior.
\end{proof}

\section{Additional Experimental Details and Results}
\label{app:beyond-prior-experiment}

This appendix collects the experimental material not shown in the main text.  \Cref{app:natural-prior-experiments} reports the natural-prior benchmark, where the learner is given the intended prior set.  \Cref{app:additional-beyond-prior-diagnostics} reports auxiliary diagnostics for the unknown-prior setting beyond the three figures kept in \Cref{sec:experiments}.

\paragraph{LP instances and cost distributions.}
All experiments use repeated general-form LPs in inequality form,
\[
    \min_{x\in\mathbb R^d}\{c^\top x:\ Ax\le b\},
\]
where inequalities originally written in the opposite direction are multiplied
by $-1$, the feasible region $\mathcal X=\{x:Ax\le b\}\subseteq\mathbb R^d$ is
fixed, and only the cost vector $c\in\mathbb R^d$ varies.  For synthetic
instances, $d$ is the original decision dimension.  Netlib instances are
preprocessed similarly to \citet{sakaue2024projections}, so that each instance
is equivalently transformed into the inequality form displayed above.  The
reported dimension $d$ is the dimension of the preprocessed LP.

All costs are generated around a nominal objective $c_0$ in these same
general-form coordinates. Table~\ref{tab:lp-cost-params} reports its Euclidean norm and coordinate range, together with the radius values used by the generator. Most natural-prior experiments use the additive factor model
\[
    c = c_0 + U_c \theta,\qquad
    \theta_j = \sigma_j z_j,\qquad z_j\overset{\mathrm{i.i.d.}}{\sim}\mathcal N(0,1),
\]
where $c_0\in\mathbb R^d$ is the nominal cost vector and
$U_c\in\mathbb R^{d\times r_c}$ is the planted cost-variation subspace.  The
coordinate standard deviations are
\[
    \sigma_{ij}=\alpha_i R_i \beta_i^{j-1},\qquad j=1,\ldots,r_c,
\]
for instance $i$.  The exact learner in the known-prior experiment is given the
ball $\mathcal C_i=\{c:\|c-c_{0,i}\|_2\le R_i^{\mathcal C}\}$.  The actual
training and testing costs are sampled inside a slightly smaller ball of radius
$R_i=\gamma_i R_i^{\mathcal C}$; after the Gaussian factor perturbation is drawn,
the code radially clips it whenever necessary so that $\|c-c_0\|_2\le R_i$.
Thus the known-prior data are truncated to the intended local region, while the
prior set given to \textsc{OursExact} is the larger ball of radius
$R_i^{\mathcal C}$.  Both radii are listed in
Table~\ref{tab:lp-cost-params}. For the Netlib cases, the same latent
Gaussian factor $\theta$ is converted into a bounded log-multiplicative
perturbation of the nominal costs and is then clipped to the same local ball.

In the unknown-prior experiments the learner is not given
$\mathcal C_i$.  The costs are generated from the same nominal center $c_0$ and
reference radius $R_i$ but are not radially clipped:
\[
    c=c_0+U_c\theta,\qquad
    \theta_j\sim\mathcal N\!\left(0,(\eta_i\alpha_i R_i\beta_i^{j-1})^2\right),
\]
where $\eta_i$ is an experiment-specific scale multiplier.  The main
unknown-prior figures use $\eta_i=0.35$ for Packing, MinCostFlow, and GROW7,
$\eta_i=20$ for RandomLP A--D, and an ambient-coordinate Gaussian with
$\eta_i=5.2$ for SC205.  The $\rho$-sensitivity figure uses the same construction
with $\eta_i=0.40$ for Packing, $\eta_i=25$ for RandomLP A/C, and an
ambient-coordinate Gaussian with $\eta_i=50$ for SC205.

\begin{table*}[t]
\centering
\scriptsize
\setlength{\tabcolsep}{2pt}
\caption{LP instances and cost-distribution parameters.  Here $d$ is the
decision dimension of the LP used by the algorithms, after
lossless bound normalization for Netlib MPS files, $r_c$ is the rank of the
planted cost-variation subspace, $R_i^{\mathcal C}$ is the radius of the
known-prior ball, and $R_i$ is the smaller radius used to sample and clip the
known-prior training and testing costs.  The center $c_{0,i}$ is always reported
in the same general-form cost coordinates as the displayed LP.}
\label{tab:lp-cost-params}
\resizebox{\textwidth}{!}{%
\begin{tabular}{llrrrrrrr}
\toprule
Instance & construction & $d$ & $r_c$ & $\|c_0\|_2$ & coordinate range of $c_0$ & $R_i^{\mathcal C}$ & $R_i$ & $(\alpha,\beta)$ \\
\midrule
Packing      & block packing & 360 & 28 & 42.344 & $[-4.069,0]$ & 1.694 & 1.524 & $(0.92,0.94)$ \\
MaxFlow      & DAG max-flow & 300 & 12 & 22.212 & $[-3.239,0]$ & $1.28{\times}10^{-3}$ & $1.15{\times}10^{-3}$ & $(0.95,1.00)$ \\
MinCostFlow  & unit-flow min-cost-flow & 360 & 28 & 3228.138 & $[-464.800,-4.960]$ & 3.968 & 3.492 & $(0.72,0.94)$ \\
ShortestPath & $16\times16$ grid shortest path & 480 & 16 & 1449.364 & $[-95.000,-3.000]$ & $4.72{\times}10^{-2}$ & $4.49{\times}10^{-2}$ & $(0.70,0.82)$ \\
RandomLP A   & random bounded LP & 140 & 24 & 10.763 & $[-2.597,2.634]$ & $2.87{\times}10^{-3}$ & $2.64{\times}10^{-3}$ & $(0.70,0.82)$ \\
RandomLP B   & random bounded LP & 180 & 26 & 13.100 & $[-2.187,3.226]$ & $8.07{\times}10^{-3}$ & $7.26{\times}10^{-3}$ & $(0.70,0.82)$ \\
RandomLP C   & random bounded LP & 220 & 28 & 14.803 & $[-3.053,2.842]$ & $2.15{\times}10^{-3}$ & $1.89{\times}10^{-3}$ & $(0.70,0.82)$ \\
RandomLP D   & random bounded LP & 260 & 30 & 16.075 & $[-2.350,2.154]$ & $2.40{\times}10^{-3}$ & $2.06{\times}10^{-3}$ & $(0.70,0.82)$ \\
GROW7        & Netlib MPS & 301 & 20 & 20.445 & $[-7.000,0]$ & $1.26{\times}10^{-2}$ & $1.16{\times}10^{-2}$ & $(0.70,0.82)$ \\
SC205        & Netlib MPS & 203 & 20 & 1.000 & $[-1.000,0]$ & $3.89{\times}10^{-4}$ & $3.58{\times}10^{-4}$ & $(0.70,0.82)$ \\
SCAGR25      & Netlib MPS & 500 & 24 & 4375.781 & $[-662.000,54.900]$ & $4.80{\times}10^{-2}$ & $4.42{\times}10^{-2}$ & $(0.70,0.82)$ \\
STAIR        & Netlib MPS & 473 & 10 & 1.000 & $[-1.000,0]$ & $8.14{\times}10^{-5}$ & $1.63{\times}10^{-6}$ & $(0.70,0.82)$ \\
\bottomrule
\end{tabular}
}
\end{table*}

\subsection{Natural-prior benchmark}
\label{app:natural-prior-experiments}

\paragraph{Protocol.}
The natural-prior benchmark uses packing, maxflow, mincostflow, shortest\_path, RandomLP A--D, and the Netlib instances GROW7, SC205, SCAGR25, and STAIR.  In this experiment the exact learner is given the intended prior set and is denoted \textsc{OursExact}.  We compare it with \textsc{Rand}, \textsc{DataDrivenProj}, \textsc{FCNN-c}, and \textsc{Full}.  \textsc{Rand} is a random projection baseline, \textsc{DataDrivenProj} is the PCA-based shared projection baseline, \textsc{FCNN-c} is the cost-only neural projection baseline, and \textsc{Full} solves the original LP and serves as the normalization reference.

In the reduced-dimension sweep, the projection baselines are evaluated at $K\in\{5,10,20,30,40,50\}$.  Exact compression learns its own rank from the representation sample, so its objective ratio is repeated across the displayed $K$ values as an exact-recovery reference.  In the sample-efficiency study, $K=20$ is fixed only for the approximate baselines, while \textsc{OursExact} again uses the dimension discovered by \Cref{alg:adj-cumulative}.  The main metric is the average test objective ratio.  Shaded bands show standard errors over retained random seeds. 

\begin{figure*}[t]
\centering
\includegraphics[width=0.97\textwidth]{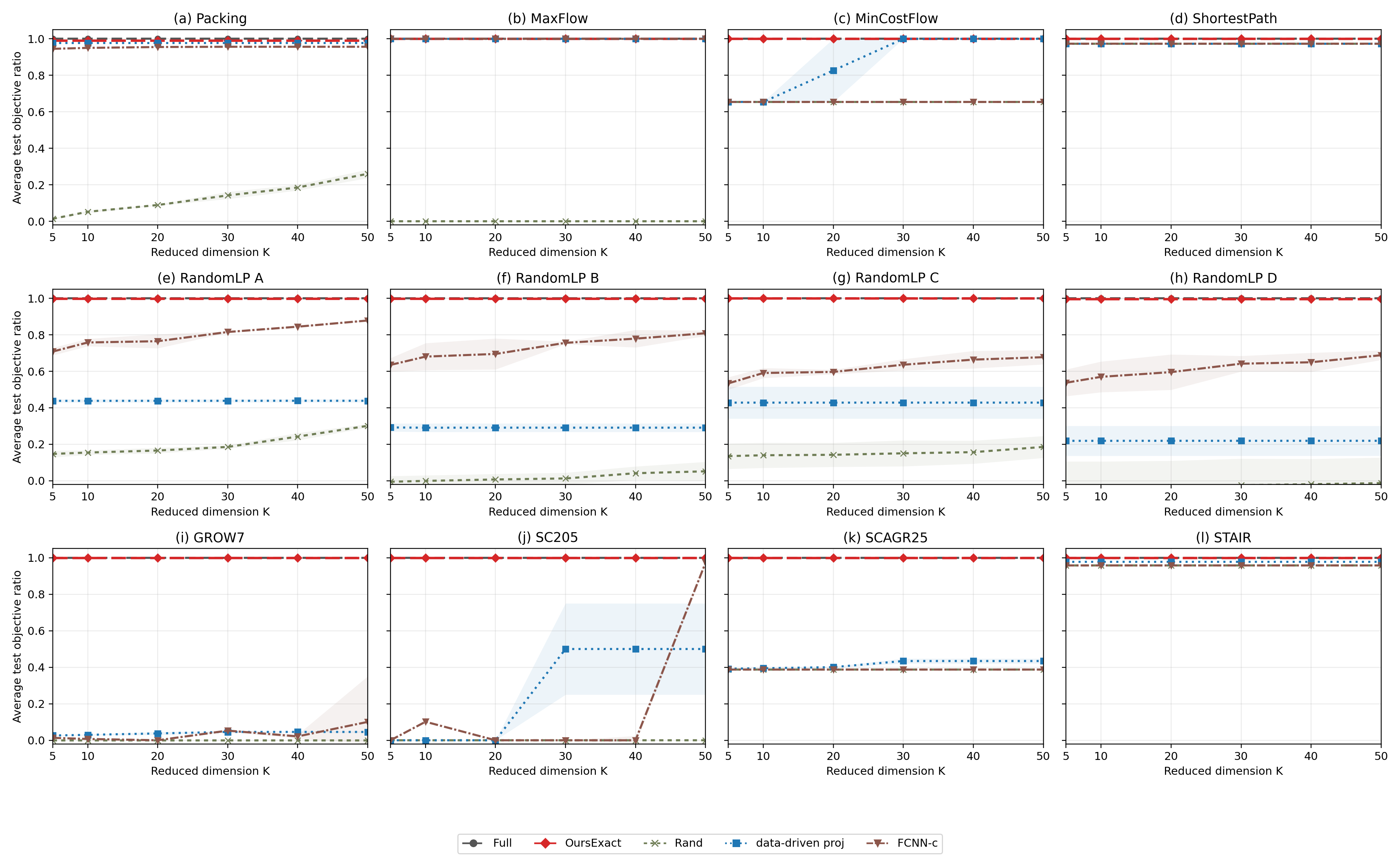}
\caption{Objective ratio as the projection-baseline dimension $K$ varies under natural priors.  Our method is plotted as a reference across all displayed $K$ values.}
\label{fig:k-sweep-fixedC}
\end{figure*}

\Cref{fig:k-sweep-fixedC} shows a clear separation on the RandomLP families and on the harder Netlib instances.  \textsc{OursExact} attains full or near-full objective value across these panels, whereas \textsc{DataDrivenProj} remains far below one on RandomLP B/D, GROW7, SCAGR25, and much of SC205.  \textsc{FCNN-c} is often the strongest approximate baseline on RandomLP A--D and can improve at larger $K$, but it still falls short of exact compression except in a few large-$K$ cases.

The same benchmark also contains projection-friendly regimes.  Packing, MaxFlow, ShortestPath, and STAIR are already well aligned with low-dimensional approximate projections, so most learned baselines approach the full objective there.  MinCostFlow is intermediate: \textsc{DataDrivenProj} reaches exact or near-exact behavior only after $K$ becomes large enough, while the exact method is insensitive to this projection-budget choice because it learns the relevant directions directly.

\begin{figure*}[t]
\centering
\includegraphics[width=0.97\textwidth]{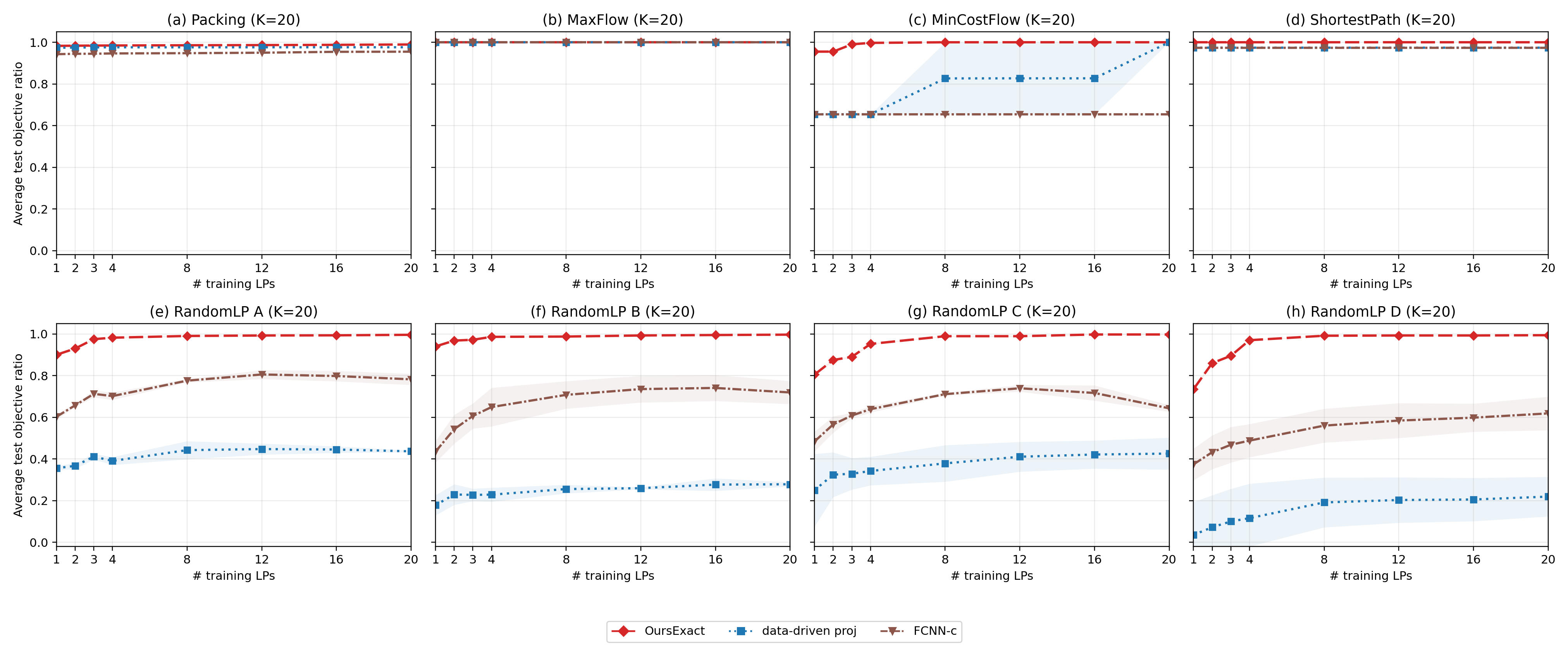}
\caption{Objective ratio as the sample size varies under natural priors.}
\label{fig:sample-efficiency}
\end{figure*}

\begin{figure*}[t]
\centering
\includegraphics[width=0.97\textwidth]{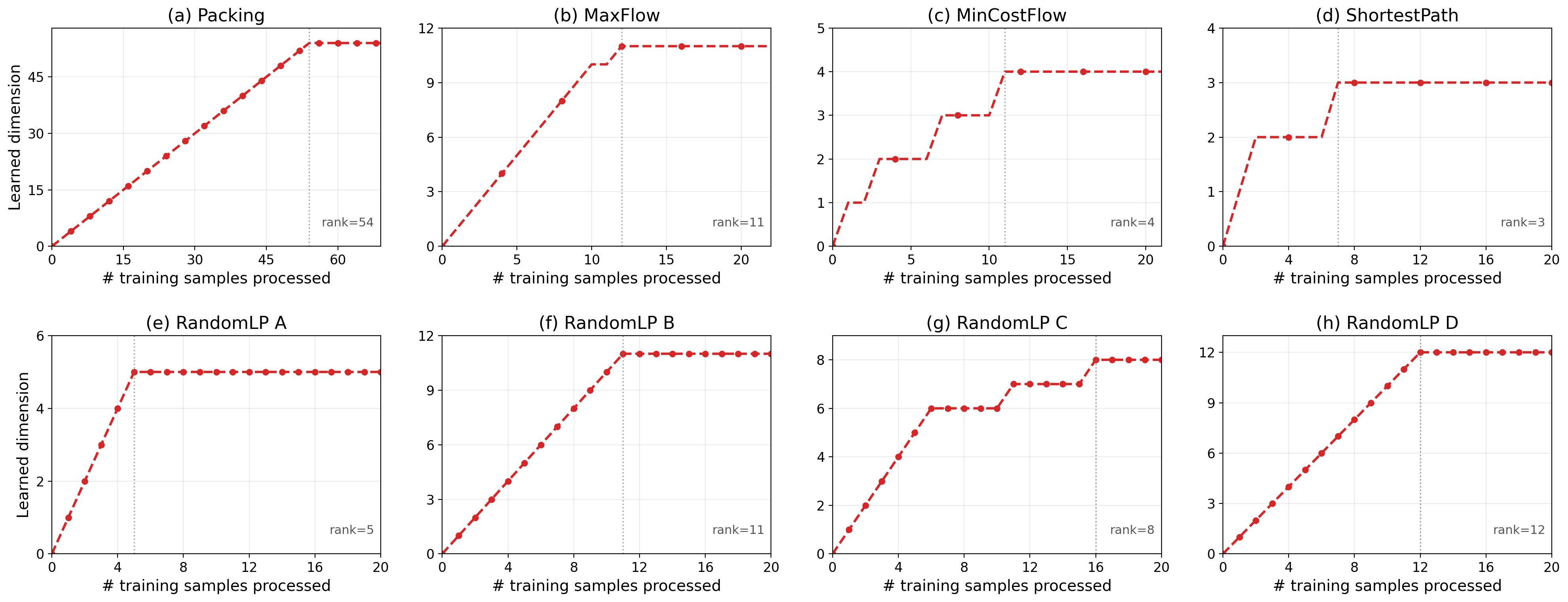}
\caption{Learned dimension growth of our algorithm on the original synthetic LP instances.}
\label{fig:rank-growth}
\end{figure*}

\Cref{fig:sample-efficiency} highlights the sample-efficiency difference in the natural-prior setting.  On RandomLP A--D, \textsc{OursExact} quickly reaches a near-exact objective ratio as more optimal-face directions are observed, while \textsc{DataDrivenProj} stays around a substantially lower plateau and \textsc{FCNN-c} improves more gradually.  The same pattern appears on GROW7, where the projection baselines remain weak even as the training set grows.  By contrast, Packing, MaxFlow, and ShortestPath are easy for both exact compression and the stronger approximate baselines.

\Cref{fig:rank-growth} reports the learned dimensions on the same synthetic LP instances.  The graph-structured families saturate quickly, while Packing and the RandomLP instances expose larger optimal-face subspaces before stabilizing.  This figure is the known-prior analogue of \Cref{fig:beyond-prior-rank-growth}.

\subsection{Additional unknown-prior diagnostics}
\label{app:additional-beyond-prior-diagnostics}

\paragraph{Protocol.}
The unknown-prior experiment removes access to the true prior set.  For \textsc{OursEstC}, we construct a calibrated estimated prior of the form $\widehat\C=\{c:s_{\widehat\theta}(c)\le S_{(k)}\}$ using the ridge Mahalanobis score from \Cref{app:estimated-prior-construction}.  Unless otherwise stated, the outside-mass parameter is $\rho=0.1$.  The radius is determined by calibration scores and the order statistic $S_{(k)}$; no Gaussian quantile or parametric Gaussian coverage model is used.  In the fixed-sample clean-split comparisons, the scripts use separate historical batches for fitting, calibration, and representation learning.  In the sample-budget diagnostics, the horizontal axis counts the same observed costs used to fit/calibrate the working prior and to train the exact-compression learner, so \textsc{OursEstC} and the projection baselines are compared at the same displayed sample budget.

The projection baselines are trained directly on the observed cost samples and do not construct an estimated prior.  Thus the comparison isolates whether a calibrated convex set, followed by exact compression inside that set, is useful when the true prior is unavailable.

\begin{figure*}[t]
\centering
\includegraphics[width=0.97\textwidth]{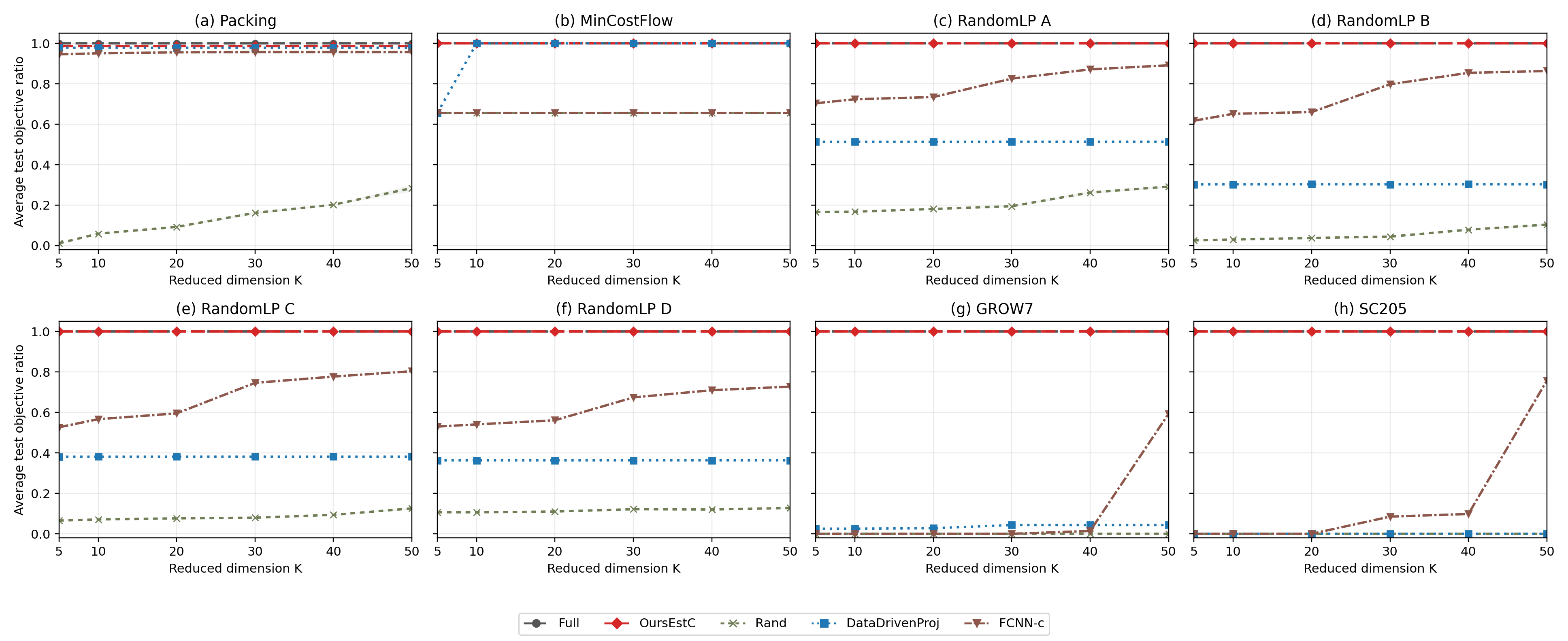}
\caption{Reduced-dimension sweep with an unknown prior set.}
\label{fig:beyond-prior-k-sweep}
\end{figure*}

\Cref{fig:beyond-prior-k-sweep} gives the broad reduced-dimension comparison under the calibrated unknown-prior protocol.  \textsc{OursEstC} achieves exact or near-exact performance on the retained unknown-prior test instances.  The approximate baselines can be strong on projection-friendly instances such as Packing and MinCostFlow, but they remain substantially below exact compression on RandomLP B--D and GROW7.  This supports the same conclusion as in the main-text: the calibrated prior is useful because it lets the exact learner focus on the high-mass region where decisions must be recovered.

\begin{figure*}[t]
\centering
\includegraphics[width=0.97\textwidth]{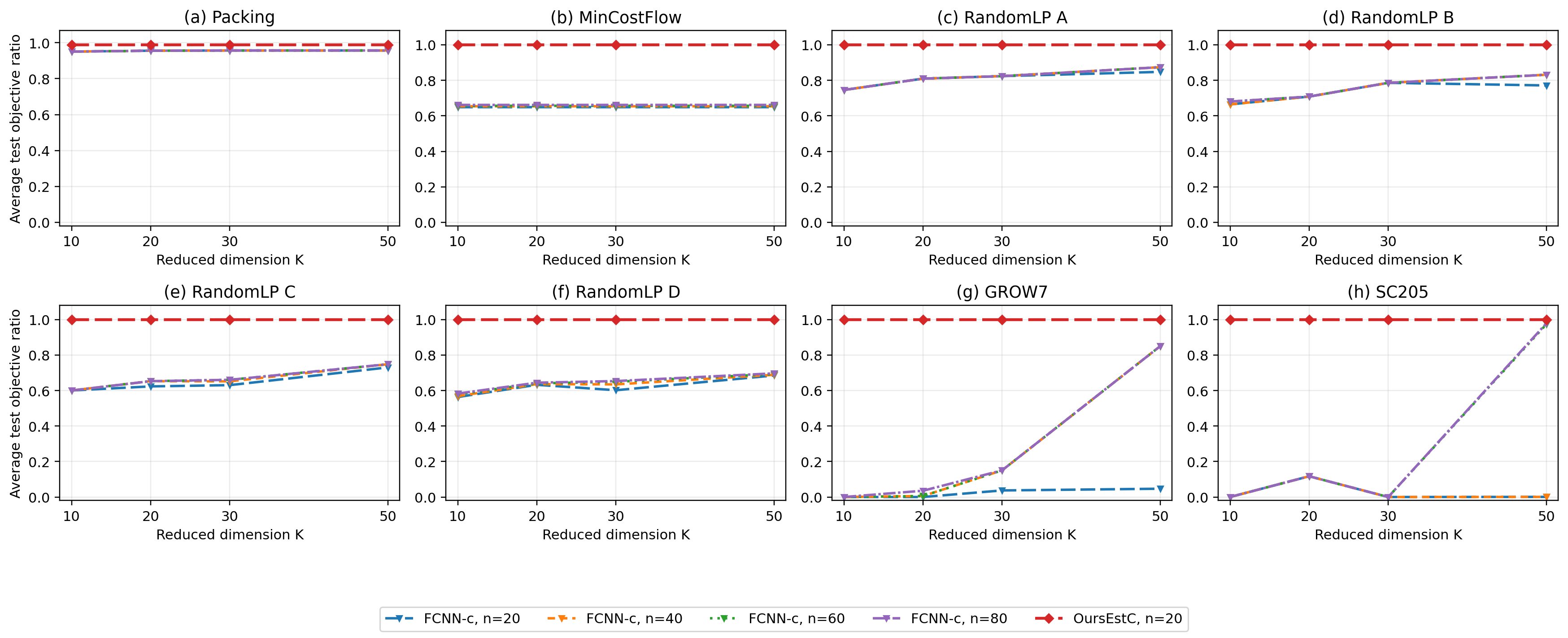}
\caption{Reduced-dimension sweep for the plain-random \textsc{FCNN-c} baseline, with the $n_{samples}=20$ \textsc{OursEstC} result shown as a reference.}
\label{fig:beyond-prior-costonly-k-sweep}
\end{figure*}

\Cref{fig:beyond-prior-costonly-k-sweep} complements \Cref{fig:beyond-prior-costonly-samples} by fixing the sample budget and varying the reduced dimension for \textsc{FCNN-c}. \Cref{fig:beyond-prior-datadriven-samples,fig:beyond-prior-datadriven-k-sweep} report the corresponding diagnostics for the PCA-based \textsc{DataDrivenProj} baseline.  The baseline is highly effective on Packing and MinCostFlow, where a shared low-dimensional projection aligns well with the observed optimizer geometry. In conclusion, our exact reduction substantially outperforms all baselines in terms of data requirement and accuracy.

\begin{figure*}[t]
\centering
\includegraphics[width=0.97\textwidth]{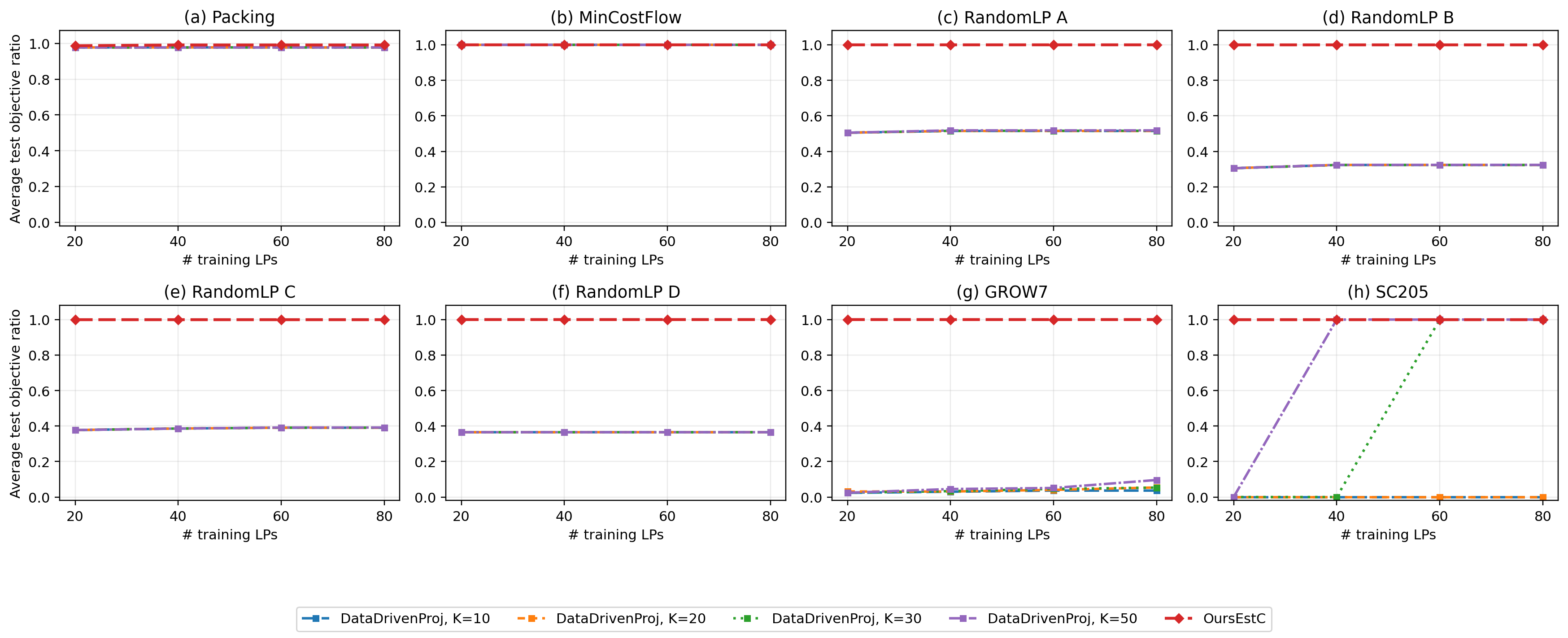}
\caption{Sample-efficiency study for the merged \textsc{DataDrivenProj} baseline, with \textsc{OursEstC} shown as the reference.}
\label{fig:beyond-prior-datadriven-samples}
\end{figure*}

\begin{figure*}[t]
\centering
\includegraphics[width=0.97\textwidth]{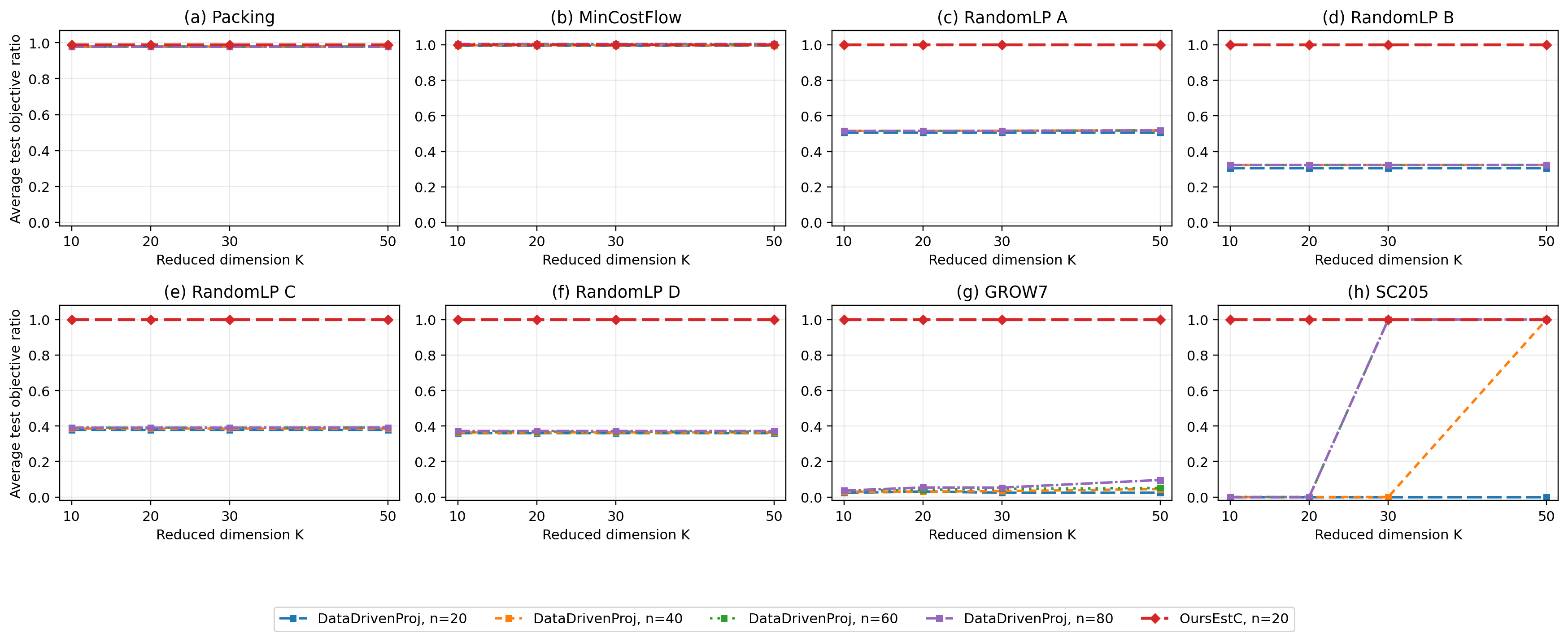}
\caption{Reduced-dimension sweep for the \textsc{DataDrivenProj} baseline, with the $n=20$ \textsc{OursEstC} result shown as the reference.}
\label{fig:beyond-prior-datadriven-k-sweep}
\end{figure*}

\end{document}